\documentclass[12pt]{amsart}
\textwidth=6.5in \oddsidemargin=0in \evensidemargin=0in
\topmargin=0in \textheight=8.5in

\usepackage[all]{xy}

\usepackage{graphicx} 

\usepackage{amsmath}
\usepackage{amssymb}
\usepackage{amsfonts}

\newcommand{\Cdb}{\mbox{$\mathbb{C}$}}
\newcommand{\Ddb}{\mbox{$\mathbb{D}$}}

\newcommand{\Rdb}{\mbox{$\mathbb{R}$}}

\newcommand{\A}{\mbox{${\mathcal A}$}}

\newcommand{\F}{\mbox{${\mathcal F}$}}

\renewcommand{\P}{\mbox{${\mathcal P}$}}

\newcommand{\U}{\mbox{${\mathcal U}$}}

\newcommand{\norm}[1]{\Vert#1\Vert}
\newcommand{\bignorm}[1]{\bigl\Vert#1\bigr\Vert}
\newcommand{\Bignorm}[1]{\Bigl\Vert#1\Bigr\Vert}
\newcommand{\biggnorm}[1]{\biggl\Vert#1\biggl\Vert}

\newcommand{\rnorm}[1]{\Vert#1\Vert_{r}}

\newcommand{\Rad}{{\rm Rad}}

\newcommand{\HI}{H^\infty}

\newtheorem{theorem}{Theorem}[section]
\newtheorem{lemma}[theorem]{Lemma}
\newtheorem{corollary}[theorem]{Corollary}
\newtheorem{proposition}[theorem]{Proposition}

\theoremstyle{remark}
\newtheorem{remark}[theorem]{\bf Remark}
\theoremstyle{definition}

\numberwithin{equation}{section}

\begin{document}

\title[]{Maximal theorems and square functions for analytic operators on
$L^p$-spaces}

\author{Christian Le Merdy, Quanhua Xu}
\address{Laboratoire de Math\'ematiques\\ Universit\'e de  Franche-Comt\'e
\\ 25030 Besan\c con Cedex\\ France}
\email{clemerdy@univ-fcomte.fr}

\address{Laboratoire de Math\'ematiques\\ Universit\'e de  Franche-Comt\'e
\\ 25030 Besan\c con Cedex\\ France}
\email{qxu@univ-fcomte.fr}

\date{\today}

\thanks{The authors are both supported by the research program ANR-06-BLAN-0015}

\begin{abstract} 
Let $T\colon L^p(\Omega)\to L^p(\Omega)$ be a contraction, with $1<p<\infty$, 
and assume that $T$ is analytic, that is, $\sup_{n\geq 1}n\norm{T^n-T^{n-1}}\,<\infty\,$. 
Under the assumption that $T$ is positive (or contractively regular), 
we establish the boundedness of various Littlewood-Paley square functions associated with $T$. 
In particular we show that $T$ satisfies 
an estimate $\bignorm{\bigl(\sum_{n=1}^{\infty} n^{2m-1}\bigl\vert T^n(T-I)^m(x)
\bigr\vert^2\bigr)^{\frac{1}{2}}}_p\,\lesssim \norm{x}_p$ for any integer $m\geq 1$.
As a consequence we show maximal inequalities of the form 
$\bignorm{\sup_{n\geq 0}\, (n+1)^m\bigl\vert T^n(T-I)^m(x)
\bigr\vert}_p\,\lesssim\, \norm{x}_p$, for any integer $m\geq 0$. 
We prove similar results in the context of noncommutative $L^p$-spaces. We 
also give analogs of these maximal inequalities for bounded analytic semigroups, 
as well as applications to $R$-boundedness properties.
\end{abstract}

\maketitle

\bigskip\noindent
{\it 2000 Mathematics Subject Classification : 47B38, 46L52, 46A60.}

\bigskip

\section{Introduction.}
Let $(\Omega,\mu)$ be a measure space, let $1<p<\infty$ and let $T\colon L^p(\Omega)\to L^p(\Omega)$ be
a positive contraction. Then Akcoglu's Theorem \cite{A} asserts that $T$ satisfies a
maximal ergodic inequality,
\begin{equation}\label{1Ergodic}
\Bignorm{\sup_{n\geq 0}\,\frac{1}{n+1}\,\Bigl\vert  
\sum_{k=0}^{n} T^k(x)\Bigr\vert}_p\,\lesssim \norm{x}_p,\qquad x\in L^p(\Omega).
\end{equation}
A well-known question is to determine which operators satisfy a
stronger maximal inequality, 
\begin{equation}\label{1Max0}
\Bignorm{\sup_{n\geq 0}\,\vert  T^n(x)\vert}_p\,\lesssim  \norm{x}_p,\qquad x\in L^p(\Omega).
\end{equation}
In this paper we show that this holds true provided that 
$T$ is analytic, that is, there exists a constant $K\geq 0$ such that 
$$
n\norm{T^{n}-T^{n-1}}\leq K
$$ 
for any $n\geq 1$ (see Section 2 for some background).
More generally, we show that for any integer $m\geq 0$,
analytic positive contractions
$T\colon L^p(\Omega)\to L^p(\Omega)$
satisfy a maximal inequality
\begin{equation}\label{1Maxm}
\Bignorm{\sup_{n\geq 0}\, (n+1)^m\bigl\vert T^n(T-I)^m(x)\bigr\vert}_p\,\lesssim\, 
\norm{x}_p,\qquad x\in L^p(\Omega).
\end{equation}
Note that for any $m\geq 1$, the sequence of operators $(T^n(T-I)^m)_{n\geq 0}$
appearing here is the $m$-th order discrete derivative of the original sequence $(T^n)_{n\geq 0}$.
The proofs of these inequalities rely on the boundedness of certain 
discrete Littlewood-Paley square functions of independent interest that we establish in Section 3. 
In particular we will show that 
for $T$ as above, we have an estimate
\begin{equation}\label{1LP1}
\Bignorm{\Bigl(\sum_{n=1}^{\infty} n\,\bigl\vert T^n(x)-T^{n-1}(x)
\bigr\vert^2\Bigr)^{\frac{1}{2}}}_p\lesssim \norm{x}_p,\qquad 
x\in L^p(\Omega).
\end{equation}
These maximal theorems and square function estimates extend Stein's famous results \cite{S1,S2} 
which show that (\ref{1Max0}),
(\ref{1Maxm}) and (\ref{1LP1}) hold true in the case when $T$ acts as a contraction 
$L^q(\Omega) \to L^q(\Omega)$ for
any $1\leq q\leq \infty$ and its $L^2$-realization is a positive selfadjoint operator.

\bigskip
Let $M$ be a von Neumann algebra equipped with a normal semifinite faithful trace 
and for any $1\leq p\leq \infty$, let
$L^p(M)$ be the associated noncommutative $L^p$-space. Let $T\colon M\to M$ be a 
positive contraction whose restriction to $L^1(M)\cap M$ extends to a
contraction $T\colon L^1(M)\to L^1(M)$. Recall that in this case, 
$T$ actually extends to a contraction $L^q(M)\to L^q(M)$ for any $1\leq q\leq \infty$.
It is shown in \cite{JX} that $T$ satisfies a noncommutative analog 
of (\ref{1Ergodic}). In the latter paper, a large part of Stein's work mentioned above is also 
transfered to the noncommutative setting. Indeed it is shown that 
if the $L^2$-realization $T\colon L^2(M)\to L^2(M)$ is a positive selfadjoint operator, then 
for any $1<p<\infty$, $T$ satisfies noncommutative analogs 
of (\ref{1Max0}) and (\ref{1Maxm}). This is generalized in \cite{Bek} under an  
appropriate condition on the numerical range of $T\colon L^2(M)\to L^2(M)$. 
We extend these results
by showing that for any $1<p<\infty$, the noncommutative analogs of 
(\ref{1Max0}) and (\ref{1Maxm}) hold true provided that $T\colon L^p(M)\to L^p(M)$ is 
merely analytic (which is a much weaker assumption).

\bigskip
Besides investigating the behaviour of operators and their powers (discrete semigroups), 
we consider continuous semigroups $(T_t)_{t\geq 0}$, both in the commutative
and in the noncommutative settings. The continuous analog of the maximal inequality 
(\ref{1Max0}) reads as follows:
\begin{equation}\label{1SG}
\Bignorm{\sup_{t>0}\bigl\vert T_t(x)\bigr\vert}_p \lesssim\norm{x}_p.
\end{equation}
We prove that such an estimate holds true whenever $(T_t)_{t\geq 0}$ is a bounded analytic 
semigroup on $L^p(\Omega)$ (with $1<p<\infty$) such that $T_t\colon L^p(\Omega)
\to L^p(\Omega)$ is a positive contraction for any $t\geq 0$. Likewise we show 
that the noncommutative analog of (\ref{1SG}) holds true whenever $(T_t)_{t\geq 0}$
is a semigroup of positive contractions on $L^q(M)$ for any $1\leq q\leq\infty$ and 
$(T_t)_{t\geq 0}$ is a bounded analytic semigroup on $L^p(M)$ (with $1<p<\infty$).
These results both extend Stein's classical maximal theorem \cite{S1,S2} 
for semigroups and its recent noncommutative counterpart from \cite{JX}.
Finally we extend some results from \cite[Chapter 5]{JLX} concerning
$R$-boundedness in the noncommutative setting.

\bigskip 
In the above presentation and later on in the paper, $\lesssim$ stands for an inequality up to 
a constant which may depend on $T$ and $m$, but not on $x$.

\medskip
\section{Preliminaries.}
An operator $T\colon L^p(\Omega)\to L^p(\Omega)$ is called regular if there is a constant 
$C\geq 0$ such that 
$$
\bignorm{\sup_{k\geq 1}\vert T(x_k)\vert}_p\,\leq\, C\bignorm{\sup_{k\geq 1}\vert x_k\vert}_p
$$
for any finite sequence $(x_k)_{k\geq 1}$ in $L^p(\Omega)$. Then we let $\norm{T}_r$ denote the 
smallest $C$ for which this holds. The set of all regular operators on $L^p(\Omega)$ is a
vector space on which $\norm{\ }_r$ is a norm. We say that $T$ is contractively regular if
$\norm{T}_r\leq 1$. Clearly any positive operator
$T$ is regular and $\norm{T}_r=\norm{T}$ in this case. Thus all statements given for 
contractively regular operators apply to positive contractions. It is well-known that conversely,
$T$ is regular with $\norm{T}_r\leq C$ if and only if there is a positive operator
$S\colon L^p(\Omega)\to L^p(\Omega)$ with $\norm{S}\leq C$, such that 
$\vert T(x)\vert \leq S(\vert x\vert)$ for any $x\in L^p(\Omega)$ (see \cite[Chap. 1]{MN}). 
Furthermore, $T$ is contractively regular if
$T$ acts as a contraction $L^q(\Omega)\to L^q(\Omega)$ for any
$1\leq q \leq \infty$.

\bigskip
We recall some definitions and simple facts about sectorial
operators and analyticity. Throughout we let $X$ denote an arbitrary (complex) Banach space and  
we let $B(X)$ denote the algebra of all bounded operators on $X$. Next 
for any angle $\omega\in(0,\pi)$, we introduce
$$
\Sigma_{\omega}\,=\,\bigl\{z\in \Cdb^*\, :\, \vert{\rm Arg}(z)\vert<\omega\bigr\},
$$
the open sector of angle $2\omega$ around $(0,\infty)$. 

Let $A\colon D(A)\subset X \to X$ be a (possibly unbounded)
closed linear operator, with dense domain $D(A)$. We let $\sigma(A)$ denote the 
spectrum of $A$ and for any $\lambda\in\Cdb\setminus\sigma(A)$, we let 
$R(\lambda,A)=(\lambda-A)^{-1}$ denote the corresponding resolvent operator.
We say that $A$ is sectorial if there exists an angle $\theta\in(0,\pi)$
such that $\sigma(A)$ is contained in the closed sector
$\overline{\Sigma_{\theta}}$ and
$$
(S)_\theta\qquad\qquad\qquad\qquad 
\exists\, K\geq 0\quad \big\vert \quad  \vert \lambda \vert \norm{R(\lambda,A)}\leq K,\qquad \lambda \in
\Cdb\setminus \overline{\Sigma_{\theta}}.\qquad\qquad\qquad\qquad\qquad
$$
Then we let $\omega(A)$ be the infimum of all $\theta$ such that $(S)_\theta$ holds, and this 
real number is called the type of $A$.
It is well-known that if $(S)_\theta$ holds true for some $\theta\in(0,\pi)$, then there  
exists $\varepsilon>0$ such that $(S)_{\theta-\varepsilon}$ holds true as well.
Thus,
\begin{equation}\label{2Sector1}
(S)_{\theta}\ \Longleftrightarrow\ \omega(A)<\theta.
\end{equation}

Let $(T_t)_{t\geq 0}$ be a bounded strongly continuous semigroup on $X$.
We call it a bounded analytic semigroup if there exists a positive angle 
$\alpha\in\bigl(0,\frac{\pi}{2}\bigr)$ and a 
bounded analytic family $z\in\Sigma_\alpha\mapsto  T_z\in B(X)$ extending $(T_t)_{t>0}$.
Let $-A$ be the infinitesimal generator of $(T_t)_{t\geq 0}$. Analyticity has two 
classical characterizations in terms of that operator. First, $(T_t)_{t\geq 0}$ is a bounded
analytic semigroup if and only if $T_t(X)\subset D(A)$ for any $t>0$ and
there exists a constant $K\geq 0$ such that 
$\norm{tAT_t}\leq K$ for any $t>0$.
Note here that since $T_t=e^{-tA}$, we have 
\begin{equation}\label{2Derivative}
tAT_t\,=\, -t\,\frac{\partial}{\partial t}\bigl(T_t\bigr),\qquad t>0.
\end{equation}
Second, $(T_t)_{t\geq 0}$ is a bounded
analytic semigroup if and only if $A$ is sectorial and $\omega(A)<\frac{\pi}{2}$. According  
to (\ref{2Sector1}), this is also equivalent to saying that $A$ satisfies $(S)_{\frac{\pi}{2}}$.
We refer e.g. to \cite{Go,Pa} for proofs and complements on semigroups.

\bigskip
We will make a crucial use of $H^\infty$-calculus and square functions for sectorial operators.
Here are the basic notions and results which will be needed.
For more information, we refer  e.g. to
\cite{CDMY, KaW1, KW, LM}.

For any $\theta\in (0,2\pi)$, we define
$$
\HI(\Sigma_\theta)  = \{f\colon\Sigma_\theta \to \Cdb\,\vert\,
f\text{ is analytic and bounded}\}.
$$
This is a Banach algebra with the norm 
$$
\norm{f}_{\HI(\Sigma_\theta)} =
\sup\bigl\{\vert f(\lambda)\vert\, :\, \lambda\in \Sigma_\theta\}.
$$
Then let $\HI_0(\Sigma_\theta)\subset \HI(\Sigma_\theta)$ be the subalgebra of all
$f$ for which there exist two constants $s,C>0$ such that
$$
\vert f(\lambda)\vert \leq C\, 
\min\bigl\{\vert \lambda\vert^s,\, \vert \lambda\vert^{-s}\bigr\},\qquad\lambda\in \Sigma_\theta.
$$
For any sectorial operator $A$, for any
$\theta\in (\omega(A),\pi)$ and for any $f\in
\HI_0(\Sigma_\theta)$, we define
$$
f(A)\, =\, \frac{1}{2\pi i}\,\int_{\Gamma_\gamma} f(\lambda)
R(\lambda,A)\, d\lambda,
$$
where $\omega(A)<\gamma<\theta$ and $\Gamma_\gamma$ is the
boundary $\partial\Sigma_\gamma$ oriented counterclockwise. This integral is
well-defined, its
definition does not depend on $\gamma$ and the resulting mapping
$f\mapsto f(A)$ is an algebra homomorphism from
$\HI_0(\Sigma_\theta)$ into $B(X)$. We say that $A$ has a bounded
$\HI(\Sigma_\theta)$ functional calculus if the latter homomorphism is
bounded, that is, there exists a constant $C>0$ such that
$$
\norm{f(A)}\leq C \norm{f}_{\HI(\Sigma_\theta)},\qquad f\in
\HI_0(\Sigma_\theta).
$$

Consider now the specific case when $X=L^p(\Omega)$, with $1<p<\infty$. On such a space,
Cowling, Doust, McIntosh and Yagi have proved a remarkable
equivalence result between the boundedness of $\HI$ functional calculus 
and certain square function estimates. In particular they
established the following key result.

\begin{proposition}\label{2SFE} \cite{CDMY}
Let $A$ be a sectorial operator on $L^p(\Omega)$ and assume that there exists
$\theta_0\in (0,\pi)$ such that $A$ admits a bounded $\HI(\Sigma_\theta)$ functional calculus 
for any $\theta\in(\theta_0,\pi)$. Then for any $\theta\in(\theta_0,\pi)$ and any
$\varphi\in\HI_0(\Sigma_\theta)$, there exists a constant $C\geq 0$ such that 
\begin{equation}\label{2SFEbis}
\Bignorm{\Bigl(\int_{0}^{\infty}\bigl\vert \varphi(tA)x\bigr\vert^2\, 
\frac{dt}{t}\,\Bigr)^{\frac{1}{2}}}_p\, \leq\, C\norm{x}_p,\qquad 
x\in L^p(\Omega).
\end{equation}
\end{proposition}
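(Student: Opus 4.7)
The plan is to implement the classical CDMY strategy via dyadic discretization of the integral followed by a Khintchine-Kahane randomization, which reduces matters to the hypothesized $\HI$-calculus bound applied to an explicit family of symbols. First, I would decompose
$$\int_{0}^{\infty}\bigl\vert \varphi(tA)x\bigr\vert^2\,\frac{dt}{t}\,=\,\sum_{n\in\Zdb}\int_{1}^{2}\bigl\vert \varphi(2^n uA)x\bigr\vert^2\,\frac{du}{u},$$
and reduce, via Fubini and Minkowski's integral inequality (directly for $p\geq 2$, and by a duality argument through the adjoint $A^*$ on $L^{p'}(\Omega)$ for $p<2$), to the uniform-in-$u\in[1,2]$ estimate $\bignorm{(\sum_{n\in\Zdb}\vert\varphi(2^n uA)x\vert^2)^{\frac{1}{2}}}_p\lesssim\norm{x}_p$.

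Second, I would invoke the pointwise Khintchine equivalence on $L^p(\Omega)$,
$$\Bignorm{\Bigl(\sum_{n}\vert f_n\vert^2\Bigr)^{\frac{1}{2}}}_p\, \sim_p\,\Bigl(\Edb\Bignorm{\sum_n \varepsilon_n f_n}_p^p\Bigr)^{\frac{1}{p}},$$
so that the discrete square function bound will follow once one controls the Rademacher sum $\bignorm{\sum_{|n|\leq N}\varepsilon_n \varphi(2^n uA)x}_p$ uniformly in $N\in\Ndb$, $u\in[1,2]$, and signs $(\varepsilon_n)\in\{\pm 1\}^{\Zdb}$.

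The key observation is that this Rademacher sum equals $g^{N}_{u,\varepsilon}(A)x$, where $g^{N}_{u,\varepsilon}(\lambda)=\sum_{|n|\leq N}\varepsilon_n\varphi(2^n u\lambda)$ is a finite linear combination of $\HI_0(\Sigma_\theta)$ functions, hence itself lies in $\HI_0(\Sigma_\theta)$. Using the defining decay $\vert\varphi(\mu)\vert\leq C\min(\vert\mu\vert^s,\vert\mu\vert^{-s})$ on $\Sigma_\theta$, a geometric-series computation yields
$$\sup_{\lambda\in\Sigma_\theta}\;\sum_{n\in\Zdb}\min\bigl(\vert 2^n u\lambda\vert^s,\vert 2^n u\lambda\vert^{-s}\bigr)\,\leq\, K(s),$$
uniformly in $u\in[1,2]$, so that $\|g^{N}_{u,\varepsilon}\|_{\HI(\Sigma_\theta)}\leq K$ uniformly in $N,u,\varepsilon$. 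The assumed bounded $\HI(\Sigma_\theta)$-calculus then gives $\|g^{N}_{u,\varepsilon}(A)x\|_p\lesssim\norm{x}_p$, and letting $N\to\infty$ and averaging over the signs yields (\ref{2SFEbis}).

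The main obstacle is the reduction in the first step, i.e., passing from the continuous square function to its discrete analogue uniformly across the whole range $1<p<\infty$; once this reduction is in place, the remaining ingredients (Khintchine, the uniform pointwise $\HI$-bound on sign sums of $\HI_0$ functions, and the hypothesis) combine essentially algebraically to deliver the estimate.
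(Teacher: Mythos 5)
First, a remark on the comparison itself: the paper offers no proof of Proposition \ref{2SFE} --- it is imported verbatim from \cite{CDMY} --- so there is no in-paper argument to measure yours against. Judged on its own terms, the discrete core of your sketch is correct and is indeed the standard mechanism: the uniform bound $\sum_{n\in\Zdb}\min(\vert 2^nu\lambda\vert^s,\vert 2^nu\lambda\vert^{-s})\leq K(s)$ on $\Sigma_\theta$, the fact that $g^N_{u,\varepsilon}$ stays in $\HI_0(\Sigma_\theta)$ with $H^\infty$-norm controlled by $K(s)$, and Khintchine's inequality together give
$\bignorm{(\sum_{n}\vert\varphi(2^nuA)x\vert^2)^{1/2}}_p\lesssim\norm{x}_p$ uniformly in $u\in[1,2]$, for \emph{every} $1<p<\infty$. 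That part would survive scrutiny.

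The genuine gap is exactly where you place it, but it is not repaired by what you propose. For $1<p<2$ the reduction from the continuous to the discrete square function cannot pass through a pointwise-in-$u$ (or $L^1$-in-$u$) bound: the inequality
$\bignorm{(\int_1^2 F_u\,\frac{du}{u})^{1/2}}_p\lesssim\sup_{u}\bignorm{F_u^{1/2}}_p$
is simply false for $p<2$ as a statement about measurable families (take $F_u=N\,{\bf 1}_{\{\vert\omega-u\vert<1/N\}}$ on $\Omega=[1,2]$: the left side is of order $1$ while the right side is $O(N^{1/2-1/p})\to 0$), so the operator structure must be exploited and Minkowski cannot be replaced by a soft argument. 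The duality you invoke does not supply this: the adjoint of the map $x\mapsto(\varphi(tA)x)_t$, $L^p\to L^p(L^2(dt/t))$, is the ``co-square function'' map $g\mapsto\int_0^\infty\varphi(tA)^*g_t\,\frac{dt}{t}$ from $L^{p'}(L^2)$ to $L^{p'}$, \emph{not} the upper square function estimate for $A^*$ on $L^{p'}$; and if you try to prove that co-square function bound by pairing against $h\in L^p$ you land back on the estimate you are trying to prove. Discretizing the co-square function instead runs into the mirror-image failure: for $p'>2$ one cannot bound $\int_1^2\bignorm{(\sum_k\vert g_{2^ku}\vert^2)^{1/2}}_{p'}\frac{du}{u}$ by $\norm{g}_{L^{p'}(L^2)}$ (Minkowski goes the wrong way, and the same moving-bump example defeats it). Closing the case $1<p<2$ requires an additional ingredient --- e.g.\ a Calder\'on reproducing formula $\int_0^\infty\varphi_1(t\lambda)\varphi_2(t\lambda)\frac{dt}{t}=1$ combined with a square-function/dual-square-function splitting, or the fact that on $L^p$ the bounded $\HI$ calculus upgrades to an $R$-bounded ($\ell^2$-valued) calculus as in \cite{KaW1}, which is what lets one keep the $u$-integral inside the $L^p$ norm throughout. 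Your sketch names the obstacle honestly but leaves precisely this, the only genuinely hard point of the theorem, unproved.
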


\bigskip 
Let us now turn to discrete semigroups. Let $T\in B(X)$. We say that $T$ is power
bounded if the set
\begin{equation}\label{2Power}
\P_T=\{T^n\, :\, n\geq 0\}
\end{equation}
is bounded. Then we say that $T$ is analytic if moreover the set
\begin{equation}\label{2Analytic}
\A_T=\bigl\{n(T^{n}-T^{n-1})\,  :\,n\geq 1\bigr\}
\end{equation}
is bounded. This notion of discrete analyticity goes back to 
\cite{CSC}. Since $(T^{n}-T^{n-1})_{n\geq 1}$ is the `discrete derivative' of the sequence
$(T^{n})_{n\geq 0}$, we can regard $n(T^{n}-T^{n-1})$ as a discrete analog of 
$t\frac{\partial}{\partial t}(T_t)$. In view of (\ref{2Derivative}), the boundedness of (\ref{2Analytic}) is 
therefore a natutal discrete analog of the boundedness of $\{tAT_t\, :\, t>0\}$.

The most important result concerning discrete analyticity is perhaps the following characterization:
an operator $T\colon X\to X$ is
power bounded and analytic if and only if
\begin{equation}\label{2Ritt}
\sigma(T)\subset \overline{\Ddb}\qquad\hbox{and}\qquad
\bigl\{(\lambda-1)R(\lambda,T)\,:\,\vert\lambda\vert>1\bigr\}\ \hbox{is bounded}.
\end{equation}
This property is called the `Ritt condition'. The key argument 
for this characterization is due to O. Nevanlinna \cite{N}, however we refer
to \cite{Ly,NZ} for a complete proof and complements. Let us gather a few observations which will be used
later on in the paper. First we note that (\ref{2Ritt}) implies that
\begin{equation}\label{2Spectrum}
\sigma(T)\subset\Ddb\cup\{1\}.
\end{equation}
Indeed, $\norm{R(\lambda, T)}\geq d(\lambda,\sigma(T))^{-1}$ for any $\lambda\notin\sigma(T)$. Second, 
(\ref{2Ritt}) implies the existence of a constant $K\geq 0$
such that $\vert \lambda-1 \vert\norm{R(\lambda,T)}\leq K$ whenever ${\rm Re}(\lambda)>1.$ This means 
that 
$$
A=I-T
$$
satisfies $(S)_{\frac{\pi}{2}}$. According to (\ref{2Sector1}), this implies that
$A$ is a sectorial operator of type $<\frac{\pi}{2}$. Hence 
\begin{equation}\label{2Sector2}
\exists\,\theta\in\bigl(0,\tfrac{\pi}{2}\bigr)\ \big\vert\ \sigma(T)\subset \, 1-\overline{\Sigma_\theta}.
\end{equation}
In this case, 
the bounded analytic semigroup $(T_t)_{t\geq 0}$ generated by $-A$ is given by 
\begin{equation}\label{2Tt}
T_t=e^{-t}e^{tT},\qquad t\geq 0.
\end{equation} 

\bigskip
We now recall the definition of $R$-boundedness (see \cite{BG,CPSW}).
Let $(\varepsilon_k)_{k\geq 1}$ be a sequence of independent
Rademacher variables on some probability space $\Omega_0$. Let
$\Rad(X)\subset L^2(\Omega_0;X)$ be the closure of ${\rm
Span}\{\varepsilon_k\otimes x\, :\, k\geq 1,\ x\in X\}$ in the Bochner space
$L^2(\Omega_{0};X)$. Thus for any finite family $x_1,\ldots,x_n$
in $X$, we have
$$
\Bignorm{\sum_k \varepsilon_k\otimes x_k}_{\Rad(X)} \,=\,
\Bigr(\int_{\Omega_0}\Bignorm{\sum_k \varepsilon_k(s)\,
x_k}_{X}^{2}\,ds\,\Bigr)^{\frac{1}{2}}.
$$
By definition, a set $\F\subset B(X)$ is $R$-bounded if there is
a constant $C\geq 0$ such that for any finite families
$T_1,\ldots, T_n$ in $\F$, and any $x_1,\ldots,x_n$ in $X$, we have
$$
\Bignorm{\sum_k \varepsilon_k\otimes T_k (x_k)}_{\Rad(X)}\,\leq\, C\,
\Bignorm{\sum_k \varepsilon_k\otimes x_k}_{\Rad(X)}.
$$
Obviously any $R$-bounded set is bounded and if $X$  is isomorphic 
to a Hilbert space, then  all bounded subsets of $B(X)$ are automatically
$R$-bounded. However if $X$ is not isomorphic to a
Hilbert space, then $B(X)$ contains bounded subsets which are not
$R$-bounded \cite[Prop. 1.13]{AB}.

Let $(\Omega,\mu)$ be a measure space and let $1<p<\infty$. 
Then $\Rad(L^p(\Omega))\approx L^p(\Omega;\ell^2)$. 
Hence a set $\F\subset B(L^p(\Omega))$ is $R$-bounded if and only if we have an estimate
$$
\Bignorm{\Bigl(\sum_k\bigl\vert T_k(x_k)\bigr\vert^2\Bigr)^{\frac{1}{2}}}_p\,\leq \, C\,
\Bignorm{\Bigl(\sum_k \vert  x_k \vert^2\Bigr)^{\frac{1}{2}}}_p
$$
for finite families $(T_k)_k$ in $\F$ and $(x_k)_k$ in $X$.

We shall now consider these general definitions for specific sets of operators.
Let $(T_t)_{t\geq 0}$ be a bounded analytic semigroup on $X$.
We say that this is an $R$-bounded analytic semigroup if there exists a positive angle $\alpha>0$ 
such that $\{ T_z\, :\, z\in\Sigma_\alpha\}$ is $R$-bounded. 
It was observed in \cite{W1} that this holds true if and only if the two sets
$$
\bigl\{ T_t\, :\, t>0\bigr\}\qquad\hbox{and}\qquad\bigl\{ tAT_t\, :\, t>0 \bigr\}
$$
are $R$-bounded. 

Accordingly we will say that an operator $T\in B(X)$ is an $R$-analytic power bounded
operator if the two sets  
$\P_T$ and $\A_T$ from (\ref{2Power}) and (\ref{2Analytic}) are $R$-bounded.

The above notions of $R$-analyticity were introduced by Weis \cite{W1} for the continuous 
case and Blunck \cite{Bl1} for the discrete one. In both cases they played a crucial role
in the solution of maximal regularity problems on UMD Banach spaces, 
see the above papers for more information. $R$-boundedness for sectorial
operators is also a key tool for various questions regarding 
$H^\infty$ functional calculus, see in particular \cite{KaW1,KW,JLX}.

The next result is well-known to specialists.

\begin{proposition}\label{2Weis} 
Let $(T_t)_{t\geq 0}$ be a bounded analytic semigroup on $L^p(\Omega)$, with $1<p<\infty$,
and assume
that $\rnorm{T_t}\leq 1$ for any $t\geq 0$. Let $-A$ be the generator of $(T_t)_{t\geq 0}$.
Then there exists $\theta\in \bigl(0,\frac{\pi}{2}\bigr)$ such  that $A$ admits a bounded $\HI(\Sigma_\theta)$
functional calculus.
\end{proposition}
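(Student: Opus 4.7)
The plan is to combine a dilation theorem with a transference of $\HI$-calculus, and then to reduce the angle of the calculus using the assumed analyticity of $(T_t)_{t\geq 0}$.

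First, I would invoke a Fendler-type dilation. Since $\rnorm{T_t}\leq 1$, the semigroup is contractively regular, so one finds a measure space $(\tilde\Omega,\tilde\mu)$, a positive isometric embedding $J\colon L^p(\Omega)\to L^p(\tilde\Omega)$, a positive contractive projection $Q\colon L^p(\tilde\Omega)\to L^p(\Omega)$ with $QJ=I$, and a strongly continuous group $(U_t)_{t\in\Rdb}$ of positive invertible isometries of $L^p(\tilde\Omega)$, such that $T_t=QU_tJ$ for every $t\geq 0$. This is Fendler's theorem in the positive-contraction case, and extends to contractively regular semigroups by working with dominating positive contractions.

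Second, I would transfer the $\HI$-calculus from the dilating group to $A$. Denote by $-C$ the generator of the semigroup $(U_t)_{t\geq 0}$. Taking Laplace transforms in the dilation identity gives $R(\lambda,A)=QR(\lambda,C)J$ for $\mathrm{Re}(\lambda)>0$, and a contour deformation then yields $f(A)=Qf(C)J$ for every $f\in\HI_0(\Sigma_\theta)$ with $\pi/2<\theta<\pi$. Since $(U_t)_{t\in\Rdb}$ is a bounded $C_0$-group on the UMD space $L^p(\tilde\Omega)$, the Coifman--Weiss transference principle (coupled with the boundedness of the Hilbert transform on $L^p(\Rdb;L^p(\tilde\Omega))$) implies that $C$ admits a bounded $\HI(\Sigma_\theta)$-calculus for every $\theta\in(\pi/2,\pi)$. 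Combining with $f(A)=Qf(C)J$ yields $\norm{f(A)}\lesssim\norm{f}_{\HI(\Sigma_\theta)}$, and hence $A$ admits a bounded $\HI(\Sigma_\theta)$-calculus on $L^p(\Omega)$ for every $\theta\in(\pi/2,\pi)$.

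Third, I would sharpen the angle using analyticity. Since $(T_t)_{t\geq 0}$ is bounded analytic, $A$ is sectorial with $\omega(A)<\pi/2$. The Cowling--Doust--McIntosh--Yagi theory on $L^p$-spaces (of which Proposition~\ref{2SFE} is one avatar) shows that on $L^p(\Omega)$, bounded $\HI(\Sigma_{\theta_0})$-calculus for some $\theta_0>\omega(A)$ is equivalent to bounded $\HI(\Sigma_\theta)$-calculus for every $\theta>\omega(A)$. Applying this with the $\theta_0\in(\pi/2,\pi)$ produced in Step~2 gives a bounded $\HI(\Sigma_\theta)$-calculus for any $\theta$ with $\omega(A)<\theta<\pi/2$, which is the desired conclusion. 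The main obstacle is Step~2: because $C$ has spectrum on the imaginary axis and is at best sectorial of type $\pi/2$, its $\HI$-calculus on wide sectors cannot be produced by sectorial arguments alone and must instead be extracted from the UMD structure of $L^p(\tilde\Omega)$ through transference; once this is in hand, the remaining steps reduce to routine applications of the dilation identity and of the CDMY angle-reduction theory.
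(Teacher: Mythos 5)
Your Steps 1 and 2 (Fendler-type dilation followed by Coifman--Weiss transference) are a correct, if laborious, route to a bounded $\HI(\Sigma_\theta)$ functional calculus for every $\theta>\frac{\pi}{2}$: this is exactly Duong's theorem, which the paper simply cites (\cite{Du}; see also \cite[Thm. 4.13]{LM}). The genuine gap is in Step 3. There is no theorem of Cowling--Doust--McIntosh--Yagi asserting that on $L^p(\Omega)$ a bounded $\HI(\Sigma_{\theta_0})$ calculus for one $\theta_0>\omega(A)$ self-improves to a bounded $\HI(\Sigma_\theta)$ calculus for every $\theta>\omega(A)$. That angle-independence is McIntosh's theorem on \emph{Hilbert} space; outside the Hilbertian setting it fails --- Kalton has produced sectorial operators on non-Hilbertian spaces (of $L^p$ type) admitting a bounded $\HI$ calculus whose $\HI$-angle strictly exceeds the sectoriality angle $\omega(A)$. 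What the CDMY theory does give (and this is why the conclusion of Proposition \ref{2SFE} is deliberately stated for $\theta>\theta_0$ rather than for $\theta>\omega(A)$) are one-sided square function estimates; turning such estimates at a small angle back into a bounded calculus at that angle requires a \emph{uniform} ($R$-bounded) version of them, and your argument supplies nothing in that direction. So as written, Step 3 invokes a non-theorem and the proof does not close.

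The correct way to reduce the angle --- and the one the paper uses --- is the Kalton--Weis theorem \cite[Prop. 5.1]{KaW1}: if $A$ admits some bounded $\HI$ calculus and is $R$-sectorial of $R$-type $<\frac{\pi}{2}$, then it admits a bounded $\HI(\Sigma_\theta)$ calculus for some $\theta<\frac{\pi}{2}$. The needed $R$-sectoriality is not a consequence of analyticity alone; it is supplied by Weis's theorem \cite[Section 4]{W2}, which says that a bounded analytic semigroup of contractively regular operators on $L^p(\Omega)$ is automatically an $R$-bounded analytic semigroup. In other words, the hypothesis $\rnorm{T_t}\leq 1$ must be used a second time, to upgrade analyticity to $R$-analyticity, before any angle reduction is possible. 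With Step 3 replaced by these two ingredients your argument becomes correct and essentially coincides with the paper's proof, the only difference being that you reprove Duong's theorem instead of quoting it.
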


\begin{proof}
By \cite{Du} (see also \cite[Thm. 4.13]{LM}), the operator
$A$ admits a bounded $\HI(\Sigma_\theta)$ functional calculus for any $\theta>\frac{\pi}{2}$. 
On the other hand, it follows from \cite[Section 4]{W2} that 
$(T_t)_{t\geq 0}$ is an $R$-bounded analytic semigroup. 
Applying \cite[Prop. 5.1]{KaW1} we deduce the result. 
\end{proof}

\bigskip 
We end this section with a few notation. For any complex number $a$ and any $r>0$, 
we will let $D(a,r)$ denote the open disc of center $a$ and radius $r$. We let 
$\Ddb=D(0,1)$ be the usual unit disc. Also we let 
$\P$ denote the algebra of complex polynomials in one variable.

\medskip
\section{Square functions on $L^p(\Omega)$.}
Throughout the next two sections we let $(\Omega,\mu)$ be a measure space and 
we fix some $1<p<\infty$. We will establish general square function estimates 
for analytic contractively regular operators on $L^p(\Omega)$ (see Theorem \ref{3DiscreteSF} below). 

We will need the following elementary fact.

\begin{lemma}\label{3EasySFE}
Let $\U\subset\Cdb$ be an open set and let $\Gamma\subset \U$ be a compact $C^1$-curve.
Let $\varphi\colon \U\to B(L^p(\Omega))$ be an analytic function. Then there exists a contant $C\geq 0$ such that
$$
\Bignorm{\Bigl(\int_{\Gamma}\bigl\vert \varphi(\lambda)x\bigr\vert^2\, \vert d\lambda\vert\,\Bigr)^{\frac{1}{2}}}_p\,\leq C\norm{x}_p
$$
for any $x\in L^p(\Omega)$.
\end{lemma}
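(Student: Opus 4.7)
My plan is to realize $\varphi(\lambda)$ for $\lambda\in\Gamma$ via Cauchy's integral formula along an auxiliary contour sitting at a positive distance from $\Gamma$, and then pass the $L^2(\Gamma,|d\lambda|)$-norm inside this representation using the Minkowski integral inequality. First, I would exploit the compactness of $\Gamma$ in the open set $\U$ to construct a bounded open set $V$ with $\Gamma\subset V\subset \overline{V}\subset\U$ whose boundary $\Gamma_0=\partial V$ is a finite union of oriented $C^1$-closed curves in $\U$. Concretely, cover $\Gamma$ by finitely many open discs with closures in $\U$, and take $V$ to be their union. This yields constants $r:=\mathrm{dist}(\Gamma,\Gamma_0)>0$ and $M:=\sup_{z\in\overline{V}}\norm{\varphi(z)}<\infty$.

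Next, the continuity of $\lambda\mapsto \varphi(\lambda)x$ from $\Gamma$ into $L^p(\Omega)$ allows me to fix a jointly measurable representative of $(\lambda,\omega)\mapsto(\varphi(\lambda)x)(\omega)$, which legitimizes the pointwise-in-$\omega$ manipulations that follow. For a.e.\ $\omega$, Cauchy's integral formula reads
\[
(\varphi(\lambda)x)(\omega)=\frac{1}{2\pi i}\int_{\Gamma_0}\frac{(\varphi(z)x)(\omega)}{z-\lambda}\,dz,\qquad \lambda\in\Gamma.
\]
Applying the Minkowski integral inequality in $L^2(\Gamma,|d\lambda|)$, pointwise in $\omega$, pulls $\bigl(\int_\Gamma |\,\cdot\,|^2|d\lambda|\bigr)^{1/2}$ past the outer $\Gamma_0$-integral. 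The resulting inner factor $\bigl(\int_\Gamma |z-\lambda|^{-2}|d\lambda|\bigr)^{1/2}$ is bounded by $L^{1/2}/r$ uniformly in $z\in\Gamma_0$, where $L$ denotes the length of $\Gamma$.

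To conclude I would take the $L^p(\Omega)$-norm of the resulting pointwise bound and use the ordinary triangle inequality for the Bochner integral over $\Gamma_0$, which gives $\int_{\Gamma_0}\norm{\varphi(z)x}_p |dz|\leq M\cdot\mathrm{length}(\Gamma_0)\cdot\norm{x}_p$. The lemma follows with a constant of the order of $L^{1/2}M\,\mathrm{length}(\Gamma_0)/r$, uniformly in $p\in(1,\infty)$. The only step that deserves genuine care rather than routine computation is the initial construction of the separating contour $\Gamma_0$; everything afterwards is a clean Cauchy-plus-Minkowski estimate that crucially avoids any case split between $p\geq 2$ and $p<2$ (which a direct Minkowski argument applied to the square function would have forced).
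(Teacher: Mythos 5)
Your proof is correct, but it takes a genuinely different route from the paper's. The paper splits $\Gamma$ into finitely many arcs $\Gamma_j$ of small length, Taylor-expands $\varphi$ about a point $\lambda_j$ of each arc, controls the coefficients by Cauchy's inequalities, and sums a geometric series; the essential point there is that each term $c_{jk}(x)\,(\lambda-\lambda_j)^k$ separates variables, so its square function over $\Gamma_j$ is computed exactly as $\norm{c_{jk}(x)}_p$ times a scalar integral, and only the triangle inequality in $L^p(\ell^2)$ is ever used. Your argument achieves the same separation of variables globally, via the Cauchy integral formula over a separating cycle $\Gamma_0$: the kernel $(z-\lambda)^{-1}$ is scalar, so Minkowski's integral inequality in $L^2(\Gamma,\vert d\lambda\vert)$ applied pointwise in $\omega$, followed by the triangle inequality for the Bochner integral in $L^p(\Omega)$, are both legitimate for every $1<p<\infty$ — your remark that no case split $p\geq 2$ versus $p<2$ is needed is exactly the right thing to notice, and it is the same structural reason that makes the paper's term-by-term estimate work. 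Your version yields a cleaner explicit constant of order $L^{1/2}M\,\mathrm{length}(\Gamma_0)/(2\pi r)$, while the paper's is marginally more self-contained in that it only needs small discs inside $\U$ rather than the construction of a null-homologous separating cycle; that construction is the one step in your write-up requiring genuine care (e.g. perturbing the radii to rule out tangencies so that $\partial V$ really is a finite union of piecewise $C^1$ closed curves, and noting that $\partial V$ has winding number $1$ around $\Gamma$ and $0$ around $\Cdb\setminus\U$), but it is standard and harmless.
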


\begin{proof}
Let $0< r\leq d(\Gamma,\U^c)/3$. Write $\Gamma$ as the juxtaposition of
$C^1$-curves $\Gamma_1,\ldots,\Gamma_N$ of length $<r$. Then for each $j=1,\ldots, N$, choose $\lambda_j\in\Gamma_j$
and set 
$$
C_j=\sup\{\norm{\varphi(\lambda)}\, :\, \lambda\in  D(\lambda_j,2r)\}.
$$ 
Let
\begin{equation}\label{3Taylor}
\varphi(\lambda)\,=\,\sum_{k=0}^{\infty} c_{jk}\,(\lambda-\lambda_j)^k
\end{equation}
be the Taylor expansion of $\varphi$ about $\lambda_j$. Then $\norm{c_{jk}}\leq C_j/(2r)^k$ by Cauchy's inequalities.
Any $\lambda\in\Gamma_j$ satisfies (\ref{3Taylor}) hence we have
$$
\Bignorm{\Bigl(\int_{\Gamma_j}\bigl\vert \varphi(\lambda)x\bigr\vert^2\, 
\vert d\lambda\vert\,\Bigr)^{\frac{1}{2}}}_p\,\leq \, \sum_{k=0}^{\infty}
\Bignorm{\Bigl(\int_{\Gamma_j}\bigl\vert c_{jk}(x)\, (\lambda-\lambda_j)^k\bigr\vert^2\, 
\vert d\lambda\vert\,\Bigr)^{\frac{1}{2}}}_p.
$$
However for any $k\geq 0$, we have
$$
\Bignorm{\Bigl(\int_{\Gamma_j}\bigl\vert c_{jk}(x)\, (\lambda-\lambda_j)^k\bigr\vert^2\, 
\vert d\lambda\vert\,\Bigr)^{\frac{1}{2}}}_p\, =\, \bignorm{c_{jk}(x)}_p\,
\Bigl(\int_{\Gamma_j}\vert\lambda-\lambda_j\vert^{2k}\,\vert d\lambda\vert\,\Bigr)^{\frac{1}{2}}
$$
and $\vert\lambda-\lambda_j\vert\leq r$ for any $\lambda\in\Gamma_j$. Thus
$$
\Bignorm{\Bigl(\int_{\Gamma_j}\bigl\vert c_{jk}(x)\, (\lambda-\lambda_j)^k\bigr\vert^2\, 
\vert d\lambda\vert\,\Bigr)^{\frac{1}{2}}}_p\,
\leq\, \norm{c_{jk}}\,\norm{x}_p\,\vert\Gamma_j\vert r^k\,
\leq\, \norm{x}_p\,\vert\Gamma_j\vert \,\frac{C_j}{2^k}\,.
$$
Consequently,
$$
\Bignorm{\Bigl(\int_{\Gamma_j}\bigl\vert \varphi(\lambda)x\bigr\vert^2\, 
\vert d\lambda\vert\,\Bigr)^{\frac{1}{2}}}_p\,\leq \,
2C_j\norm{x}_p\vert\Gamma_j\vert.
$$
Since
$$
\Bignorm{\Bigl(\int_{\Gamma}\bigl\vert \varphi(\lambda)x\bigr\vert^2\, \vert d\lambda\vert\,\Bigr)^{\frac{1}{2}}}_p\,=
\,\sum_{j=1}^N \Bignorm{\Bigl(\int_{\Gamma_j}\bigl\vert \varphi(\lambda)x\bigr\vert^2\, \vert d\lambda\vert\,\Bigr)^{\frac{1}{2}}}_p,
$$
we obtain the result with $C=2\max\{C_1,\ldots,C_N\}\,\vert\Gamma\vert$.
\end{proof}

For any $\gamma\in \bigl(0,\frac{\pi}{2}\bigr)$, let 
$$
B_\gamma= \,\bigl\{z\in  \bigl(1+\Sigma_{\pi-\gamma}\bigr)^c\, :\, 
\vert z\vert \leq \sin\gamma\ \hbox{or}\ {\rm Re}(z)\geq \sin^2\gamma\bigr\}.
$$
Alternatively, $B_\gamma$ is the convex hull of $1$ and the disc $D(0,\sin\gamma)$.

\begin{figure}[ht] 
\vspace*{2ex} 
\begin{center} 
\includegraphics[scale=0.4]{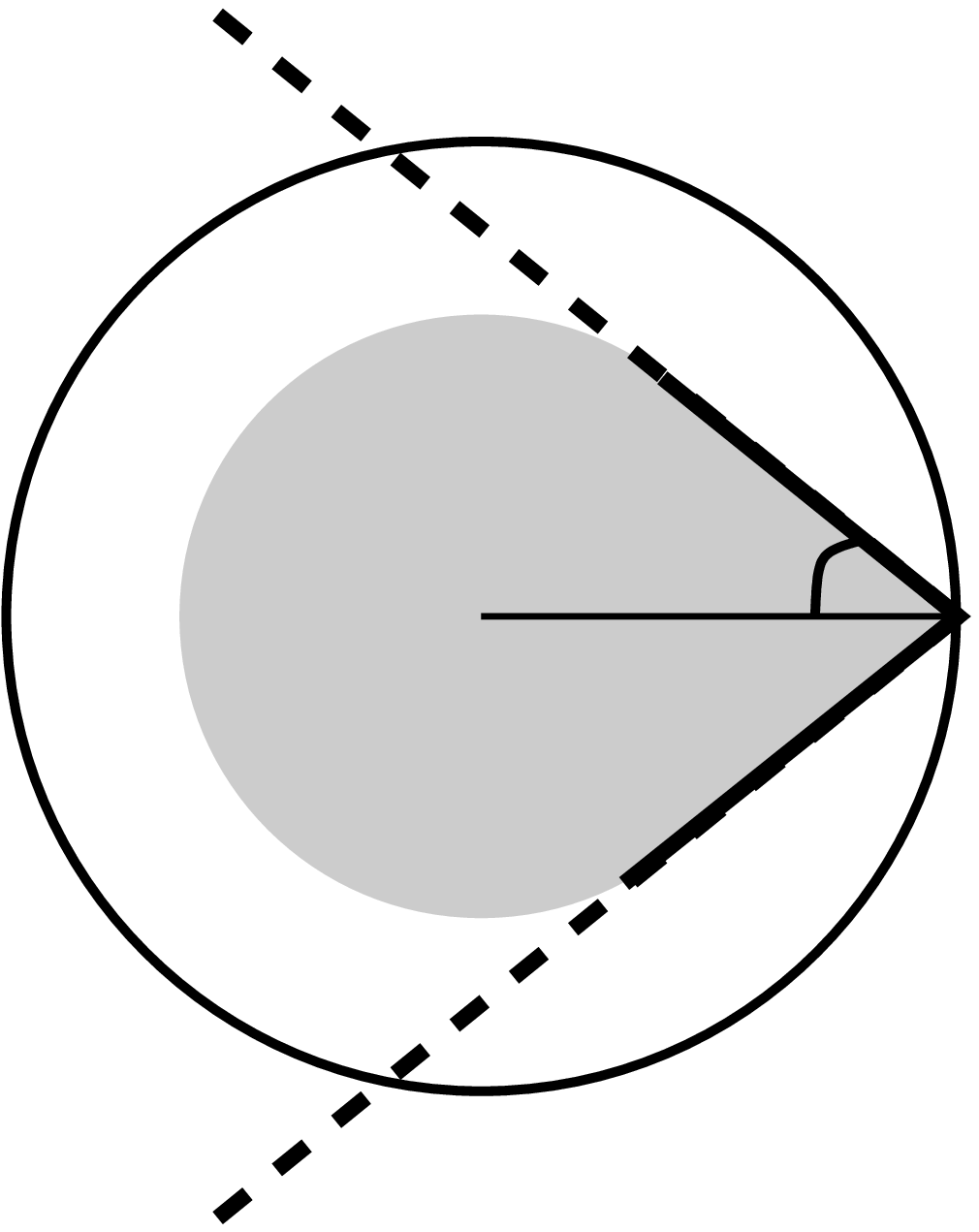}
\begin{picture}(0,0) 
\put(-2,65){{\footnotesize $1$}} 
\put(-68,65){{\footnotesize $0$}} 
\put(-32,81){{\footnotesize $\gamma$}} 
\put(-55,85){{\small $B_\gamma$}} 
\end{picture} 
\end{center} 
\caption{\label{f1}} 
\end{figure}

Following usual terminology, these sets will be called `Stolz domains' in the sequel.
We will use the fact that for any $\gamma\in \bigl(0,\frac{\pi}{2}\bigr)$, there 
exists a constant $C_\gamma$ such that
\begin{equation}\label{3Stolz}
\frac{\vert 1-z\vert}{1-\vert z\vert}\,\leq C_\gamma,\qquad z\in B_\gamma.
\end{equation}

Let $N\geq 1$ be an integer
and let $[F_{ij}]$ be an $N\times N$ matrix of polynomials, that is, $F_{i,j}$ belongs 
to $\P$ for any $1\leq i,j\leq N$. Then for any $\gamma\in \bigl(0,\frac{\pi}{2}\bigr)$, we set
$$
\bignorm{[F_{ij}]}_\gamma\,=\,\sup\Bigl\{\bignorm{[F_{ij}(z)]}_{M_N}\, :\, z\in B_\gamma\Bigr\}.
$$

\begin{proposition}\label{3Matrix}
Let $T\colon L^p(\Omega)\to L^p(\Omega)$ be any analytic contractively regular operator. Then there exists an
angle $\gamma\in \bigl(0,\frac{\pi}{2}\bigr)$ and a constant $C\geq 1$ satisfying the following
property. For any $N\geq 1$, for any $N\times N$ matrix $[F_{ij}]$ of polynomials 
and for any
$x_1,\ldots, x_N$ in $L^p(\Omega)$, we have
\begin{equation}\label{3Matrix1}
\Bignorm{\Bigl(\sum_{i=1}^{N}\Bigl\vert\sum_{j=1}^{N} F_{ij}(T)x_j\Bigr\vert^2\Bigr)^{\frac{1}{2}}}_p\,\leq\, C
\bignorm{[F_{ij}]}_\gamma\,\Bignorm{\Bigl(\sum_{j=1}^{N}\vert x_j\vert^2\Bigr)^{\frac{1}{2}}}_p.
\end{equation}
\end{proposition}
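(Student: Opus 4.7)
The plan is a Cauchy integral argument: represent each $F_{ij}(T)$ as a contour integral of $F_{ij}(\lambda)R(\lambda,T)$ around $\sigma(T)$ inside the Stolz domain $B_\gamma$, and then reduce the matrix estimate (\ref{3Matrix1}) to an $\ell^2_N$-valued version of Lemma~\ref{3EasySFE} applied to the resolvent $\lambda\mapsto R(\lambda,T)$.

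Since $T$ is analytic and power bounded, the Ritt condition (\ref{2Ritt}) holds; together with (\ref{2Spectrum})--(\ref{2Sector2}) this gives $\sigma(T)\subset\Ddb\cup\{1\}$ and $\sigma(T)\subset 1-\overline{\Sigma_\theta}$ for some $\theta\in(0,\pi/2)$. I would choose $\gamma\in(\theta,\pi/2)$ large enough that $\sigma(T)$ lies in the interior of $B_\gamma$, and let $\Gamma\subset B_\gamma\setminus\sigma(T)$ be a piecewise $C^1$ compact curve surrounding $\sigma(T)$---essentially the boundary of an intermediate Stolz domain $B_\delta$ (with $\theta<\delta<\gamma$), suitably adjusted near $\lambda=1$ to avoid the spectrum. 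The Cauchy formula then gives $F_{ij}(T)=\frac{1}{2\pi i}\int_\Gamma F_{ij}(\lambda)R(\lambda,T)\,d\lambda$ for each $i,j$. Interpreting the integrand pointwise in $\omega\in\Omega$ as the scalar matrix $[F_{ij}(\lambda)]$ acting on the $\ell^2_N$-vector $\bigl(R(\lambda,T)x_j(\omega)\bigr)_j$, and using $\bignorm{[F_{ij}(\lambda)]}_{M_N}\leq\bignorm{[F_{ij}]}_\gamma$ on $\Gamma\subset B_\gamma$ together with Cauchy--Schwarz in $\lambda$, I obtain the pointwise inequality
$$\Big(\sum_i\Big\vert\sum_j F_{ij}(T)x_j\Big\vert^2\Big)^{\frac{1}{2}}\,\lesssim\,\bignorm{[F_{ij}]}_\gamma\Big(\int_\Gamma\sum_j\bigl\vert R(\lambda,T)x_j\bigr\vert^2\,\vert d\lambda\vert\Big)^{\frac{1}{2}}.$$

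Taking $L^p$-norms on both sides then reduces (\ref{3Matrix1}) to the $\ell^2_N$-valued square function estimate
$$\Bignorm{\Big(\int_\Gamma\sum_j\bigl\vert R(\lambda,T)x_j\bigr\vert^2\,\vert d\lambda\vert\Big)^{\frac{1}{2}}}_p\,\lesssim\,\Bignorm{\Big(\sum_j\vert x_j\vert^2\Big)^{\frac{1}{2}}}_p.$$
This is precisely what the proof of Lemma~\ref{3EasySFE} delivers when run on $L^p(\Omega;\ell^2_N)$ in place of $L^p(\Omega)$ and applied to the coordinatewise extension $\lambda\mapsto R(\lambda,T)\otimes I_{\ell^2_N}$. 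The Taylor-expansion, Cauchy-inequality and triangle-inequality steps of Lemma~\ref{3EasySFE} transfer verbatim to this Banach-space-valued setting, since they rely only on scalar Cauchy estimates for the Taylor coefficients and the vector-valued triangle inequality.

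The main obstacle is justifying that $\lambda\mapsto R(\lambda,T)$ does extend to a bounded analytic $B(L^p(\Omega;\ell^2_N))$-valued function on a neighbourhood of $\Gamma$. Since the coordinatewise extension of a scalar operator $U$ on $L^p(\Omega)$ to $L^p(\Omega;\ell^2_N)$ has operator norm equal to the regular norm $\bignorm{U}_r$, this amounts to uniform control of $\bignorm{R(\lambda,T)}_r$ near $\Gamma$. This is precisely where the contractive regularity of $T$ is essential: on the portion of $\Gamma$ outside $\overline{\Ddb}$ the Neumann series $\sum_n\lambda^{-n-1}T^n$ converges in the regular-norm Banach algebra, but on the portion inside $\Ddb$ (in particular near $\lambda=1\in\sigma(T)$) one must upgrade the standard Ritt resolvent bound $\vert\lambda-1\vert\,\bignorm{R(\lambda,T)}\leq K$ to its regular-norm analogue, for instance by dilating $T$ to a positive invertible isometry on a larger $L^p$-space via Akcoglu--Fendler and transferring the resolvent estimate back through contractive projection.
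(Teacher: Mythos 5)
Your proposal has a genuine gap, and it occurs exactly at the point where the real difficulty of the proposition lives: the behaviour of the resolvent near $\lambda=1$. In the interesting case $1\in\sigma(T)$ (which is not excluded --- by (\ref{2Spectrum}) one only knows $\sigma(T)\subset\Ddb\cup\{1\}$), there is no closed contour $\Gamma$ that simultaneously surrounds $\sigma(T)$, stays inside $\overline{B_\gamma}$, and keeps a positive distance from $\sigma(T)$: since $1$ is a boundary point of the Stolz domain, a curve contained in $\overline{B_\gamma}$ cannot have winding number $1$ around the point $1$ (connect $1$ to $\infty$ by the ray $1+t$, $t>0$, which avoids $\overline{B_\gamma}$). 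So any admissible contour must approach the point $1$, and Lemma \ref{3EasySFE} --- which requires a \emph{compact} curve inside the open set where the integrand is analytic --- is simply not applicable there. Worse, the estimate you want to reduce to, namely $\bignorm{\bigl(\int_\Gamma\vert R(\lambda,T)x\vert^2\vert d\lambda\vert\bigr)^{1/2}}_p\lesssim\norm{x}_p$, is false for such a contour: the Ritt condition only gives $\norm{R(\lambda,T)}\lesssim\vert\lambda-1\vert^{-1}$, and $\int\vert\lambda-1\vert^{-2}\vert d\lambda\vert$ diverges near $1$. This is not just a failure of the method --- already for a positive selfadjoint contraction on $L^2$ the quantity $\int_\Gamma\vert R(\lambda,T)x\vert^2\vert d\lambda\vert$ computes (up to constants) $\norm{(I-T)^{-1/2}x}^2$, which is infinite off the range of $(I-T)^{1/2}$. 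Your closing remarks about upgrading the Ritt bound to the regular norm via Akcoglu--Fendler address a secondary issue and cannot repair this divergence.

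The paper's proof is built precisely to circumvent this. Setting $A=I-T$, it writes $f(A)=\frac{1}{\pi i}\int_{L_\gamma}f(\lambda)\,A(\lambda-A)^{-1}(\lambda+A)^{-1}\,d\lambda$ for polynomials $f$ vanishing at $0$, and then splits the kernel by duality as $A^{\frac12}(\lambda-A)^{-1}$ paired against $A^{*\frac12}(\lambda+A^*)^{-1}$. The two resulting square functions correspond to the functions $z^{1/2}/(e^{\pm i\gamma}\mp z)$, which lie in $\HI_0(\Sigma_\theta)$ because of the $z^{1/2}$ decay at the origin --- this decay is exactly what tames the singularity at $\lambda=1$ that kills your plain-resolvent square function. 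Their boundedness is then obtained from Proposition \ref{2SFE}, which requires a bounded $\HI(\Sigma_{\theta_0})$ calculus for some $\theta_0<\frac{\pi}{2}$; that is where contractive regularity actually enters (via Proposition \ref{2Weis}, i.e.\ Weis's $R$-analyticity theorem combined with Duong's theorem), not in a vector-valued extension of the resolvent. Lemma \ref{3EasySFE} is used only for the part of the contour at positive distance from $\sigma(A)$. To fix your argument you would need to reinstate this factorized kernel and the $H^\infty$-calculus input; without them the contour-integral reduction does not close.
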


\begin{proof}
We let $p'=p/(p-1)$ be the conjugate number of $p$. Let 
$
A=I-T
$
and let $(T_t)_{t\geq 0}$ be the semigroup defined by (\ref{2Tt}), whose generator  is $-A$.
We noticed in Section 2 that this is a bounded analytic semigroup. 
Furthermore for any $t\geq 0$, we have
$$
\norm{T_t}_r = e^{-t}\bignorm{e^{tT}}_r\leq  e^{-t} e^{t\norm{T}_r}\leq 1.
$$
Hence by Proposition \ref{2Weis}, $A$ admits a bounded $\HI(\Sigma_{\theta_0})$ functional calculus 
for some $\theta_0<\frac{\pi}{2}$.
By (\ref{2Spectrum}) and (\ref{2Sector2}), there exists 
$\gamma_0\in\bigl[\theta_0,\frac{\pi}{2}\bigr)$ such that 
$\sigma(T)\subset B_{\gamma_0}$. Equivalently,
$$
\sigma(A)=1-\sigma(T) \subset 1 -B_{\gamma_0}.
$$
We now fix $\gamma\in\bigl(\gamma_0,\frac{\pi}{2}\bigr)$. 
Then we let $L_\gamma$ be the boundary of $1-B_\gamma$ oriented counterclockwise.

\begin{figure}[ht] 
\vspace*{2ex} 
\begin{center} 
\includegraphics[scale=0.45]{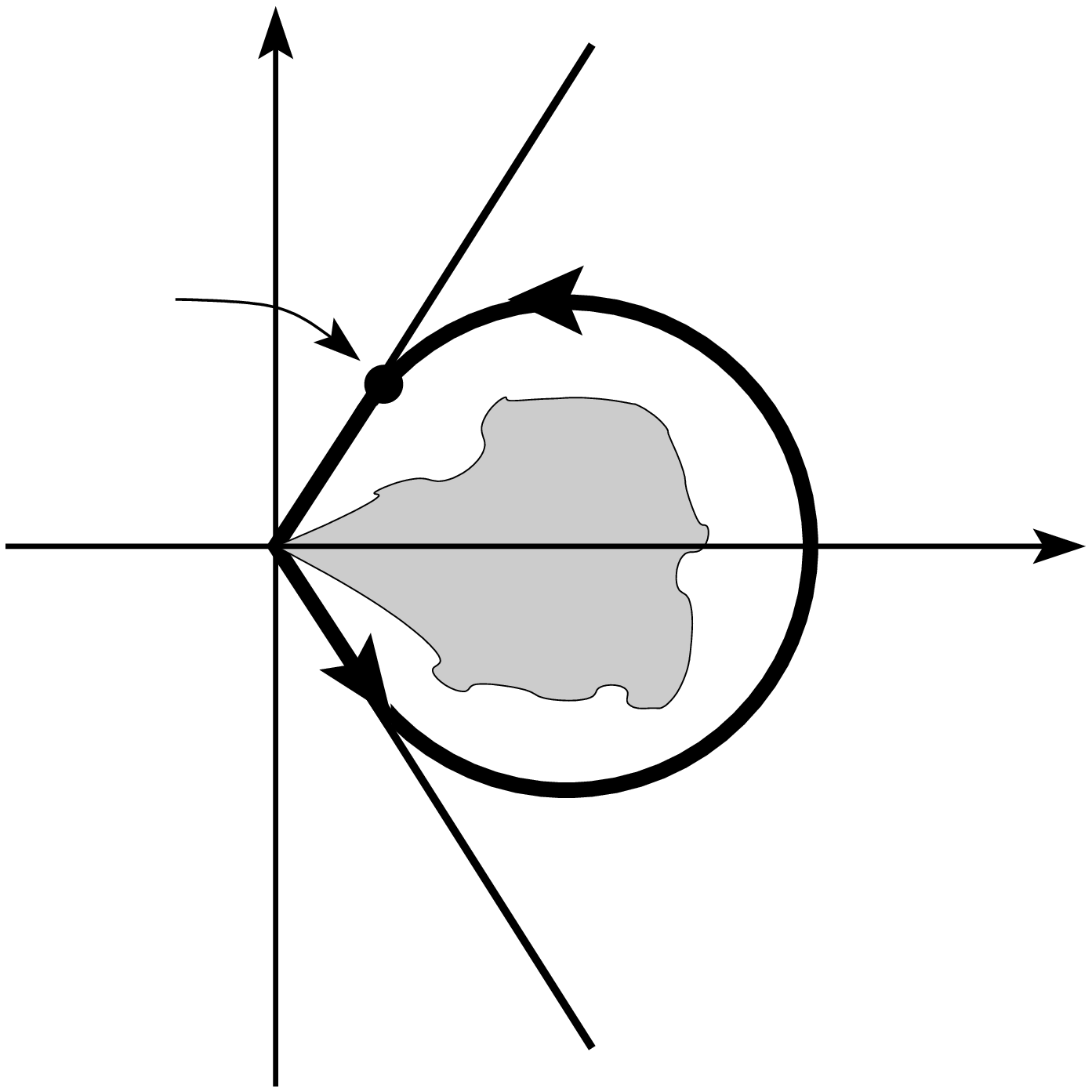}
\begin{picture}(0,0) 
\put(-148,79){{\footnotesize $0$}} 
\put(-97,174){{\small $\gamma$}} 
\put(-75,132){{\small $L_\gamma$}} 
\put(-100,98){{\small $\sigma(A)$}} 
\put(-201,131){{\small $\cos(\gamma)e^{i\gamma}$}} 
\end{picture} 
\end{center} 
\caption{ \label{f2}} 
\end{figure}

We claim that we have estimates
\begin{equation}\label{3Pf1}
\Bignorm{\Bigl(\int_{L_{\gamma}}\bigl\vert A^{\frac{1}{2}}(\lambda-A)^{-1}
x\bigr\vert^2\,\vert d\lambda\vert\,\Bigr)^{\frac{1}{2}}}_p\,
\lesssim\,\norm{x}_p,\qquad x\in L^p(\Omega),
\end{equation}
and
\begin{equation}\label{3Pf2}
\Bignorm{\Bigl(\int_{L_{\gamma}}\bigl\vert 
A^{*\frac{1}{2}}(\lambda+A^*)^{-1} y\bigr\vert^2\,\vert d\lambda\vert\,\Bigr)^{\frac{1}{2}}}_{p'}\,
\lesssim\, \norm{y}_{p'},\qquad y\in L^{p'}(\Omega).
\end{equation}
Recall that we let $\Gamma_\gamma$ denote the boundary of $\Sigma_\gamma$ oriented counterclockwise.
Thus the contour $L_{\gamma}$ is the juxtaposition of a part $L_{\gamma,1}$ of $\Gamma_{\gamma}$
and the curve $L_{\gamma,2}$ going from  $\cos(\gamma) e^{-i\gamma}$ to $\cos(\gamma) e^{i\gamma}$
counterclockwise along the circle of center $1$ and radius $\sin\gamma$. Obviously we have
\begin{align*}
\Bignorm{\Bigl(\int_{L_{\gamma}}\bigl\vert A^{\frac{1}{2}}(\lambda-A)^{-1}
x\bigr\vert^2\,\vert d\lambda\vert\,\Bigr)^{\frac{1}{2}}}_p \, & =\,
\Bignorm{\Bigl(\int_{L_{\gamma, 1}}\bigl\vert A^{\frac{1}{2}}(\lambda-A)^{-1}
x\bigr\vert^2\,\vert d\lambda\vert\,\Bigr)^{\frac{1}{2}}}_p\\&  \quad +
\Bignorm{\Bigl(\int_{L_{\gamma, 2}}\bigl\vert A^{\frac{1}{2}}(\lambda-A)^{-1}
x\bigr\vert^2\,\vert d\lambda\vert\,\Bigr)^{\frac{1}{2}}}_p\,\\ &\leq\,
\Bignorm{\Bigl(\int_{\Gamma_{\gamma}}\bigl\vert A^{\frac{1}{2}}(\lambda-A)^{-1}
x\bigr\vert^2\,\vert d\lambda\vert\,\Bigr)^{\frac{1}{2}}}_p\\&  \quad +
\Bignorm{\Bigl(\int_{L_{\gamma, 2}}\bigl\vert A^{\frac{1}{2}}(\lambda-A)^{-1}
x\bigr\vert^2\,\vert d\lambda\vert\,\Bigr)^{\frac{1}{2}}}_p\,
\end{align*}
Since $L_{\gamma,2}\cap \sigma(A)=\emptyset$, Lemma \ref{3EasySFE} ensures that we can control the 
last integral by a constant times $\norm{x}_p$. Hence to prove (\ref{3Pf1}), it suffices to  prove an estimate
\begin{equation}\label{3Pf3}
\Bignorm{\Bigl(\int_{\Gamma_{\gamma}}\bigl\vert A^{\frac{1}{2}}(\lambda-A)^{-1}
x\bigr\vert^2\,\vert d\lambda\vert\,\Bigr)^{\frac{1}{2}}}_p\,
\lesssim\,  \norm{x}_p,\qquad x\in L^p(\Omega).
\end{equation}
Likewise, to prove (\ref{3Pf2}), it suffices to prove an estimate
\begin{equation}\label{3Pf4}
\Bignorm{\Bigl(\int_{\Gamma_{\gamma}}\bigl\vert 
A^{*\frac{1}{2}}(\lambda+A^*)^{-1} y\bigr\vert^2\,\vert d\lambda\vert\,\Bigr)^{\frac{1}{2}}}_{p'}\,
\lesssim\,  \norm{y}_{p'},\qquad y\in L^{p'}(\Omega).
\end{equation}
Consider $\theta_0<\theta<\gamma< \frac{\pi}{2}\,$, and define two functions $\varphi,\psi
\in \HI_0(\Sigma_\theta)$ by letting 
$$
\varphi(z)=\,\frac{z^{\frac{1}{2}}}{e^{i\gamma} -z} \qquad\hbox{and}\qquad
\psi(z)=\,\frac{z^{\frac{1}{2}}}{e^{-i\gamma} -z}\,.
$$ 
For any $x\in L^p(\Omega)$, we have
\begin{align*}
\Bignorm{\Bigl(\int_{0}^{\infty}\bigl\vert \varphi(tA)x\bigr\vert^2\, 
\frac{dt}{t}\,\Bigr)^{\frac{1}{2}}}_p\,
& =\, \Bignorm{\Bigl(\int_{0}^{\infty}\bigl\vert 
A^{\frac{1}{2}}(e^{i\gamma} -tA)^{-1}x\bigr\vert^2\, dt\,\Bigr)^{\frac{1}{2}}}_p\\ & =
\,
\Bignorm{\Bigl(\int_{0}^{\infty}\bigl\vert A^{\frac{1}{2}}(te^{i\gamma} -A)^{-1}x\bigr\vert^2\, dt\,\Bigr)^{\frac{1}{2}}}_p.
\end{align*}
Likewise,
$$
\Bignorm{\Bigl(\int_{0}^{\infty}\bigl\vert \psi(tA)x\bigr\vert^2\, \frac{dt}{t}\,\Bigr)^{\frac{1}{2}}}_p\,
=\,
\Bignorm{\Bigl(\int_{0}^{\infty}\bigl\vert A^{\frac{1}{2}}(te^{-i\gamma} -A)^{-1}x\bigr\vert^2\, dt\,\Bigr)^{\frac{1}{2}}}_p.
$$
Hence
$$
\Bignorm{\Bigl(\int_{\Gamma_{\gamma}}\bigl\vert A^{\frac{1}{2}}(\lambda-A)^{-1}
x\bigr\vert^2\,\vert d\lambda\vert\,\Bigr)^{\frac{1}{2}}}_p \,=\, 
\Bignorm{\Bigl(\int_{0}^{\infty}\bigl\vert \varphi(tA)x\bigr\vert^2\, \frac{dt}{t}\,\Bigr)^{\frac{1}{2}}}_p\,
\,+\,
\Bignorm{\Bigl(\int_{0}^{\infty}\bigl\vert \psi(tA)x\bigr\vert^2\, \frac{dt}{t}\,\Bigr)^{\frac{1}{2}}}_p\,.
$$
Applying Proposition \ref{2SFE} to $\varphi$ and $\psi$, we deduce the estimate (\ref{3Pf1}). 
Now note that $A^*$ also admits a bounded $\HI(\Sigma_{\theta_0})$ 
functional calculus (see e.g. \cite{CDMY} for this duality principle). Hence arguing as above 
with the two functions 
$$
z\mapsto\,\frac{z^{\frac{1}{2}}}{e^{i\gamma} +z}\,\qquad\hbox{and}\qquad
z\mapsto\,\frac{z^{\frac{1}{2}}}{e^{-i\gamma} +z}\,,
$$
we get (\ref{3Pf2}). 

The estimates (\ref{3Pf1}) and (\ref{3Pf2}) can be formally strengthened as follows. 
There is a constant $C\geq 0$ such that for any integer $N\geq 1$, we have
\begin{equation}\label{3Pf5}
\Bignorm{\Bigl(\int_{L_{\gamma}}\sum_{j=1}^{N} \bigl\vert A^{\frac{1}{2}}(\lambda-A)^{-1}
x_j\bigr\vert^2\,\vert d\lambda\vert\,\Bigr)^{\frac{1}{2}}}_p\,
\leq\, C\, \Bignorm{\Bigl(\sum_{j=1}^{N}\vert x_j\vert^2\Bigr)^{\frac{1}{2}}}_p
\end{equation}
for any $x_1,\ldots,x_N$ in $L^p(\Omega)$ and similarly,
\begin{equation}\label{3Pf6}
\Bignorm{\Bigl(\int_{L_{\gamma}}  \sum_{j=1}^{N} \bigl\vert
A^{*\frac{1}{2}}(\lambda+A^*)^{-1} y_i \bigr\vert^2\,\vert d\lambda\vert\,\Bigr)^{\frac{1}{2}}}_{p'}\,
\leq\,  C\, \Bignorm{\Bigl(\sum_{i=1}^{N}\vert y_i\vert^2\Bigr)^{\frac{1}{2}}}_{p'}
\end{equation}
for any $y_1,\ldots,y_N$ in $L^{p'}(\Omega)$. Indeed (\ref{3Pf5}) (resp. (\ref{3Pf6})) can be deduced from 
(\ref{3Pf1}) (resp. (\ref{3Pf2})) by applying Khintchine's inequality and
Fubini's Theorem. The argument is similar to the one in the proof of \cite[Lemma 5.4]{LLL} so we omit it.

In the sequel, we let $\P_0\subset \P$ be the space of polynomials vanishing at $0$.
The function $\lambda\mapsto \lambda(\lambda-A)^{-1}$ 
is well-defined and bounded on $L_\gamma\setminus\{0\}$, and the same is true for 
$\lambda\mapsto f(\lambda)(\lambda-A)^{-1}$ whenever
$f\in\P_0$. It therefore follows from the Dunford functional calculus that 
$$
f(A)=\,\frac{1}{2\pi i}\,\int_{L_{\gamma}} 
f(\lambda)(\lambda-A)^{-1}\, d\lambda
$$
for any $f\in\P_0$.
Likewise,
$$
0=\,\frac{1}{2\pi i}\,\int_{L_{\gamma}} f(\lambda)(\lambda +A)^{-1}\, d\lambda
$$
for any $f\in\P_0$. Hence
$$
f(A)  = \,\frac{1}{2\pi i}\,\int_{L_{\gamma}} f(\lambda)\bigl((\lambda-A)^{-1} -
(\lambda +A)^{-1}\bigr)
\, d\lambda\,,
$$
that is,
\begin{equation}\label{3Int}
f(A) = \,\frac{1}{\pi i}\,\int_{L_{\gamma}} f(\lambda) A(\lambda-A)^{-1}
(\lambda +A)^{-1}
\, d\lambda\,.
\end{equation}
Let $N\geq 1$ be an integer, let $[F_{ij}]$ be an $N\times N$ matrix of polynomials, and let $x_1,\ldots,x_N$ be in 
$L^p(\Omega)$. For any $i,j=1,\ldots,N$, we set $f_{ij}(\lambda)=F_{ij}(1-\lambda)$, so that $F_{ij}(T)=f_{ij}(A)$.
Also we assume that $F_{ij}(1)=0$, so that $f_{ij}\in\P_0$. For any $y_1,\ldots, y_N$ in $L^{p'}(\Omega)$, we have
\begin{align*}
\sum_{i,j}\bigl\langle f_{ij}(A)x_j,y_i\bigr\rangle\, & =\,
\frac{1}{\pi i}\,\int_{L_{\gamma}}\sum_{i,j}f_{ij}(\lambda)
\bigl\langle A(\lambda-A)^{-1}
(\lambda +A)^{-1} x_j,y_i\bigr\rangle\, d\lambda\\
& =\,
\frac{1}{\pi i}\,\int_{L_{\gamma}}\sum_{i,j}f_{ij}(\lambda)
\bigl\langle A^{\frac{1}{2}}
(\lambda -A)^{-1} x_j,A^{*\frac{1}{2}}(\lambda +A^*)^{-1}y_i\bigr\rangle\, d\lambda
\end{align*}
by (\ref{3Int}). Applying Cauchy-Schwarz and H\"older's inequalities,
we deduce that 
\begin{align*}
\biggl\vert \sum_{i,j}\bigl\langle f_{ij}(A)x_j,y_i\bigr\rangle\biggr\vert\,  \leq & \,
\frac{1}{\pi}\,\Bignorm{\Bigl(\int_{L_\gamma}\sum_i\Bigl\vert\sum_j f_{ij}(\lambda) 
A^{\frac{1}{2}}(\lambda -A)^{-1} x_j\Bigr\vert^2\,\vert d\lambda\vert\,\Bigr)^{\frac{1}{2}}}_p\\
&\quad \times
\Bignorm{\Bigl(\int_{L_\gamma}\sum_i\bigl\vert
A^{*\frac{1}{2}}(\lambda +A^*)^{-1} y_i\bigr\vert^2\,\vert d\lambda\vert\,\Bigr)^{\frac{1}{2}}}_{p'}\,.
\end{align*}
Furthermore, 
$$
\Bignorm{\Bigl(\int_{L_\gamma}\sum_i\Bigl\vert\sum_j f_{ij}(\lambda) 
A^{\frac{1}{2}}(\lambda -A)^{-1} x_j\Bigr\vert^2\,\vert d\lambda\vert\,\Bigr)^{\frac{1}{2}}}_p\,
$$
is less than or equal to
$$
\Bignorm{\Bigl(\int_{L_\gamma}\bignorm{[f_{ij}(\lambda)]}_{M_N}^{2}\sum_j \bigl\vert 
A^{\frac{1}{2}}(\lambda -A)^{-1} x_j\bigr\vert^2\,\vert d\lambda\vert\,\Bigr)^{\frac{1}{2}}}_p,
$$
which in turn is less than or equal to
$$
\sup\Bigl\{\bignorm{[f_{ij}(\lambda)]}_{M_N}\, :\, \lambda\in L_{\gamma}\Bigr\}\,
\Bignorm{\Bigl(\int_{L_\gamma}\sum_j \bigl\vert 
A^{\frac{1}{2}}(\lambda -A)^{-1} x_j\bigr\vert^2\,\vert d\lambda\vert\,\Bigr)^{\frac{1}{2}}}_p.
$$
Now recall that $F_{ij}(T)=f_{ij}(A)$ and note that $\sup\bigl\{\norm{[f_{ij}(\lambda)]}_{M_N}\, :\, \lambda\in L_{\gamma}\bigr\}$ is less than or equal 
to $\norm{[F_{ij}}_\gamma$. Appealing to (\ref{3Pf5}) and (\ref{3Pf6}), 
we therefore obtain an estimate 
$$
\biggl\vert \sum_{i,j}\bigl\langle F_{ij}(T)x_j,y_i\bigr\rangle\biggr\vert\, \lesssim \,
\bignorm{[F_{ij}}_\gamma\,
\Bignorm{\Bigl(\sum_j\vert x_j\vert^2\Bigr)^{\frac{1}{2}}}_p\, 
\Bignorm{\Bigl(\sum_i\vert y_i\vert^2\Bigr)^{\frac{1}{2}}}_{p'}.
$$
Passing to the supremum over all $y_1,\ldots,y_N$ in $L^{p'}(\Omega)$ such that
$\bignorm{\bigl(\sum_i\vert y_i\vert^2\bigr)^{\frac{1}{2}}}_{p'}\leq 1$, we finally obtain 
(\ref{3Matrix1})  in the case when all $F_{ij}$'s vanish at $1$.

The general case follows at once. Indeed for an arbitrary matrix $[F_{ij}]$ of polynomials,
write $\widetilde{F}_{ij}=F_{ij}-F_{ij}(1)$. Then 
$$
\bignorm{[F_{ij}(1)]}_{M_N}\leq \bignorm{[F_{ij}]}_{\gamma}\qquad\hbox{and}\qquad
\bignorm{[\widetilde{F}_{ij}]}_{\gamma}\leq 2\bignorm{[F_{ij}]}_{\gamma}.
$$
Thus if (\ref{3Matrix1}) holds true for $[\widetilde{F}_{ij}]$ and a certain
constant $C$,  we deduce that
\begin{align*}
\Bignorm{\Bigl(\sum_{i=1}^{N}\Bigl\vert
\sum_{j=1}^{N} F_{ij}(T)x_j\Bigr\vert^2\Bigr)^{\frac{1}{2}}}_p\, & 
\leq\,\Bignorm{\Bigl(\sum_{i=1}^{N}\Bigl\vert
\sum_{j=1}^{N} \widetilde{F}_{ij}(T)x_j\Bigr\vert^2\Bigr)^{\frac{1}{2}}}_p
\, +\, 
\Bignorm{\Bigl(\sum_{i=1}^{N}\Bigl\vert
\sum_{j=1}^{N} F_{ij}(1)x_j\Bigr\vert^2\Bigr)^{\frac{1}{2}}}_p\\
& 
\leq\,\bigl(2C +1\bigr)\bignorm{[F_{ij}]}_{\gamma}\,
\Bignorm{\Bigl(\sum_{j=1}^{N} \vert x_j \vert^2\Bigr)^{\frac{1}{2}}}_p.
\end{align*}
\end{proof}

\begin{theorem}\label{3DiscreteSF}
Let $T\colon L^p(\Omega)\to L^p(\Omega)$ be an analytic contractively regular operator. 
\begin{enumerate}
\item [(1)] There exists an
angle $\gamma\in \bigl(0,\frac{\pi}{2}\bigr)$ and a constant $C\geq 1$ such that for any 
sequence $(F_n)_{n\geq 1}$ of polynomials and any $x\in L^p(\Omega)$,
\begin{equation}\label{3DiscreteSF1}
\Bignorm{\Bigl(\sum_{n=1}^{\infty}\bigl\vert F_n(T)x\bigr\vert^2\Bigr)^{\frac{1}{2}}}_p\,\leq\, C\,\norm{x}_p\,\sup\Bigl\{ 
\Bigl(\sum_{n=1}^{\infty}\bigl\vert F_n(z)\bigr\vert^2\Bigr)^{\frac{1}{2}}\, :\, z\in B_\gamma\Bigr\}.
\end{equation}
\item [(2)] For any integer $m\geq 1$, there is an estimate
\begin{equation}\label{3DiscreteSF2}
\Bignorm{\Bigl(\sum_{n=0}^{\infty} (n+1)^{2m-1}\bigl\vert T^{n}(T-I)^m(x)
\bigr\vert^2\Bigr)^{\frac{1}{2}}}_p\,\lesssim \,\norm{x}_p.
\end{equation}
\end{enumerate}
\end{theorem}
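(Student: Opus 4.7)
The plan is to deduce (1) from Proposition~\ref{3Matrix} by specialization, and then obtain (2) from (1) by choosing the right sequence of polynomials and using the Stolz-domain estimate (\ref{3Stolz}).

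\textbf{Part (1).} Fix $N\geq 1$ and polynomials $F_1,\ldots,F_N$. I apply Proposition~\ref{3Matrix} to the $N\times N$ matrix whose first column is $(F_1,\ldots,F_N)^t$ and whose remaining entries are zero, and to the vector $(x,0,\ldots,0)$. The left-hand side of (\ref{3Matrix1}) becomes $\bignorm{(\sum_{i=1}^N\vert F_i(T)x\vert^2)^{1/2}}_p$. Because the matrix $[F_{ij}(z)]$ has a single nonzero column, its operator norm on $\ell^2_N$ equals $(\sum_{i=1}^N\vert F_i(z)\vert^2)^{1/2}$. Thus Proposition~\ref{3Matrix} yields, with a uniform constant $C$ and angle $\gamma$,
$$
\Bignorm{\Bigl(\sum_{i=1}^{N}\bigl\vert F_i(T)x\bigr\vert^2\Bigr)^{\frac{1}{2}}}_p\,\leq\, C\norm{x}_p\,\sup_{z\in B_\gamma}\Bigl(\sum_{i=1}^{N}\bigl\vert F_i(z)\bigr\vert^2\Bigr)^{\frac{1}{2}}.
$$
Letting $N\to\infty$ and applying monotone convergence gives (\ref{3DiscreteSF1}).

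\textbf{Part (2).} For fixed $m\geq 1$, I apply (1) to the polynomials
$$
F_n(z)\,=\,(n+1)^{m-\frac{1}{2}}\,z^n(z-1)^m,\qquad n\geq 0,
$$
(reindexed so as to match the statement of (1)). The left-hand side of (\ref{3DiscreteSF1}) is then exactly the left-hand side of (\ref{3DiscreteSF2}), so it suffices to verify that
$$
M_{\gamma,m}\,:=\,\sup_{z\in B_\gamma}\sum_{n=0}^{\infty}(n+1)^{2m-1}\vert z\vert^{2n}\vert z-1\vert^{2m}
$$
is finite. For $z\in B_\gamma$ with $z\neq 1$, the Stolz estimate (\ref{3Stolz}) gives $\vert z-1\vert^{2m}\leq C_\gamma^{2m}(1-\vert z\vert)^{2m}$, while the elementary bound
$$
\sum_{n=0}^{\infty}(n+1)^{2m-1}r^n\,\lesssim_m\,(1-r)^{-2m},\qquad 0\leq r<1,
$$
applied with $r=\vert z\vert^2$, yields $\sum_{n\geq 0}(n+1)^{2m-1}\vert z\vert^{2n}\lesssim_m (1-\vert z\vert)^{-2m}(1+\vert z\vert)^{-2m}$. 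Multiplying these two bounds shows that $M_{\gamma,m}\lesssim_{\gamma,m} 1$; at $z=1$ the summand vanishes. Applying (1) gives (\ref{3DiscreteSF2}).

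\textbf{Expected obstacles.} All the real work has been done in Proposition~\ref{3Matrix}, which packaged the $H^\infty$-calculus-based square function estimates (\ref{3Pf5})-(\ref{3Pf6}) into an inequality controlled by a supremum over the Stolz domain $B_\gamma$. Given that machinery, the present theorem is essentially bookkeeping: the only nontrivial point is recognising that $(n+1)^{m-1/2}z^n(z-1)^m$ has a uniformly bounded square-sum over $B_\gamma$, which reduces to the standard interplay between the factor $\vert z-1\vert^m$ and the geometric decay $\vert z\vert^n$ on a Stolz region, exactly as encoded in (\ref{3Stolz}).
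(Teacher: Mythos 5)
Your proposal is correct and follows essentially the same route as the paper: part (1) is obtained by applying Proposition~\ref{3Matrix} to the single-column matrix (whose $M_N$-norm is indeed $(\sum_i\vert F_i(z)\vert^2)^{1/2}$) and letting $N\to\infty$, and part (2) by feeding the polynomials $(n+1)^{m-\frac12}z^n(z-1)^m$ into (1) and bounding the square-sum over $B_\gamma$ via the estimate $\sum_n (n+1)^{2m-1}r^n\lesssim_m (1-r)^{-2m}$ combined with (\ref{3Stolz}). The paper's computation of the supremum is the same up to the cosmetic reindexing $F_n(z)=n^{m-\frac12}z^{n-1}(z-1)^m$.
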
 

\begin{proof} We apply Proposition \ref{3Matrix} to $T$ and we thus obtain
$\gamma\in \bigl(0,\frac{\pi}{2}\bigr)$ for which (\ref{3Matrix1}) holds true. Let
$(F_n)_{n\geq 1}$ be any sequence of polynomials. We get (\ref{3DiscreteSF1}) by  applying
(\ref{3Matrix1}) to the column matrix 
$$
\left[\begin{array}{cccc} F_1 & 0 & \cdots & 0 \\ \vdots & \vdots & \ & \vdots \\ F_N  & 0 &\cdots & 0\end{array}\right]
$$
for any $N\geq 1$ and then by passing to the limit when $N\to\infty$.

To prove part (2), we fix $m\geq 1$, we set 
$$
F_n(z)= n^{m-\frac{1}{2}} z^{n-1}(z-1)^m
$$
for any $n\geq 1$,
and we aim at applying (\ref{3DiscreteSF1}) to this sequence.
For any $z\in\Ddb$, we have
\begin{align*}
\sum_{n=1}^{\infty}\bigl\vert F_n(z)\bigr\vert^2 \, 
& = \,
\sum_{n=1}^{\infty} 
n^{2m-1}\vert z\vert^{2(n-1)}\vert z-1\vert^{2m}\\ 
& \leq \,
\vert 1-z\vert^{2m} \sum_{n=0}^{\infty} 
(n+1)(n+2)\cdots(n+2m-1) \vert z\vert^{2n}\\
& \leq \,
\vert 1-z \vert^{2m} \,\frac{1}{\bigl(1-\vert z\vert^2\bigr)^{2m}}
\\
& \leq \,\biggl(\frac{\vert 1-z\vert}{1-\vert z\vert}\,\biggr)^{2m}.
\end{align*}
This upper bound is bounded on $B_\gamma$ by (\ref{3Stolz})
hence (\ref{3DiscreteSF2}) now follows from part (1).
\end{proof}

Note that (\ref{1LP1}) corresponds to (\ref{3DiscreteSF2}) for $m=1$.

\begin{remark}\label{3Remark1} Consider $T$ as in Theorem \ref{3DiscreteSF}. 
We will establish additional estimates, which are all consequences of the above theorem. 

\smallskip (1) By the Mean Ergodic Theorem, we 
have a direct sum decomposition
$$
L^p(\Omega)\,=\,N(I-T)\oplus\overline{R(I-T)},
$$
where $N(\cdotp)$ and $R(\cdotp)$ denote the kernel and the range, respectively.
Let $P\colon L^p(\Omega)\to L^p(\Omega)$ be the projection onto 
$\overline{R(I-T)}$ with respect to this decomposition. Then for any $m\geq 1$, we have
an estimate 
\begin{equation}\label{3Reverse}
\norm{P(x)}_p\,\lesssim\, 
\Bignorm{\Bigl(\sum_{n=0}^{\infty} (n+1)^{2m-1}\bigl\vert T^{n}(T-I)^m(x)
\bigr\vert^2\Bigr)^{\frac{1}{2}}}_p
\end{equation}
on $L^p(\Omega)$. In other words, the estimate (\ref{3DiscreteSF2}) can be reversed on 
$\overline{R(I-T)}$.

Let us prove (\ref{3Reverse}) for $m=1$, the other cases being similar. We start from the identity
$$
\sum_{n=0}^{\infty}(n+1)z^{2n}(1-z^2)^2\, =1, \qquad z\in\Ddb.
$$
It implies that for any $0<r<1$, we have
$$
\sum_{n=0}^{\infty}(n+1)(rT)^{2n}(rT+I)^2(rT-I)^2\, =I.
$$
Let $x\in L^p(\Omega)$ and $y\in L^{p'}(\Omega)$. Set $y_r=(rT^*+I)^2y$ for any $r$.
From the above identity, we get
$$
\langle x,y\rangle\,=\,
\sum_{n=0}^{\infty}(n+1)\bigl\langle (rT)^{n}(rT-I) x, 
(rT^*)^n (rT^*-I) y_r\bigr\rangle.
$$
Hence
$$
\bigl\vert \langle x,y\rangle  \bigr\vert\,
\leq\, 
\Bignorm{\Bigl(\sum_{n=0}^{\infty} (n+1)\bigl\vert (rT)^n(rT-I)x\bigr\vert^2\Bigr)^{\frac{1}{2}}}_{p}\,
\Bignorm{\Bigl(\sum_{n=0}^{\infty} (n+1)\bigl\vert (rT^*)^n(rT^*-I)y_r\bigr\vert^2\Bigr)^{\frac{1}{2}}}_{p'}.
$$
The operator $T^*\colon L^{p'}(\Omega)\to L^{p'}(\Omega)$ is analytic and contractively 
regular, hence satisfies the first part of Theorem \ref{3DiscreteSF}. Moreover $\norm{y_r}_{p'}\leq 4\norm{y}_{p'}$
for any $r$. Hence we can control the second factor in the right handside of the above inequality
by $\norm{y}_{p'}$, up to
a constant not depending on $r$. We deduce that 
$$
\bigl\vert \langle x,y\rangle  \bigr\vert\,
\lesssim\, 
\Bignorm{\Bigl(\sum_{n=0}^{\infty} (n+1)\bigl\vert (rT)^n(rT-I)x\bigr\vert^2\Bigr)^{\frac{1}{2}}}_p\,\norm{y}_{p'}
$$
uniformly in $r$. Taking the supremum over all $y\in L^{p'}(\Omega)$ with $\norm{y}_{p'}\leq 1$, we obtain a uniform
estimate
$$
\norm{x}_p\,\lesssim\,\Bignorm{\Bigl(\sum_{n=0}^{\infty} (n+1)r^{2n}\bigl\vert T^n(rT-I)x\bigr\vert^2\Bigr)^{\frac{1}{2}}}_p,\qquad x\in L^p(\Omega),\ 0<r<1.
$$
Now assume that $x\in R(I-T)$, i.e. $x=(T-I)\widetilde{x}$ for some $\widetilde{x}$ in $L^p(\Omega)$.
Applying (\ref{3DiscreteSF2}) to $\widetilde{x}$ (with $m=1$), we see that
the sequence $\bigl((n+1)^{\frac{1}{2}} T^n(x)\bigr)_{n\geq 0}$ belongs to $L^p(\ell^2)$.
Consequently, the sequence $\bigl((n+1)^{\frac{1}{2}}(rT-I) T^n(x)\bigr)_{n\geq 0}$ belongs to $L^p(\ell^2)$ as well
for any $0<r<1$ and this family of sequences 
tends to $\bigl((n+1)^{\frac{1}{2}} (T-I)T^n(x)\bigr)_{n\geq 0}$
when $r\to 1$. We deduce that 
$$
\Bignorm{\Bigl(\sum_{n=0}^{\infty} (n+1)\bigl\vert T^n(rT-I)x\bigr\vert^2\Bigr)^{\frac{1}{2}}}_p
\,\longrightarrow \,
\Bignorm{\Bigl(\sum_{n=0}^{\infty} (n+1)\bigl\vert T^n(T-I)x\bigr\vert^2\Bigr)^{\frac{1}{2}}}_p
$$
when $r\to 1$, and hence that
\begin{equation}\label{3Reverse2}
\norm{x}_p\,\lesssim 
\,\Bignorm{\Bigl(\sum_{n=0}^{\infty} (n+1)\bigl\vert T^n(T-I)x\bigr\vert^2\Bigr)^{\frac{1}{2}}}_p.
\end{equation}
This establishes (\ref{3Reverse}) for the elements of $R(I-T)$.

To complete the proof, set 
$$
\Lambda_m\,=\,\frac{1}{m+1}\sum_{k=1}^{m} (I-T^k)
$$
for any integer $m\geq 0$. Then $\Lambda_m\to P$ pointwise when $m\to\infty$.
Let $x$ be an arbitrary element of $L^p(\Omega)$.
Applying (\ref{3Reverse2}) with $\Lambda_m(x)$ in the place of $x$
and letting $m\to\infty$, we obtain 
the desired estimate (\ref{3Reverse}).

\smallskip (2) 
For any $m\geq 1$, $T$ 
satisfies the following estimate
\begin{equation}\label{3DiscreteSF3}
\Bignorm{\Bigl(\sum_{n=1}^{\infty} n\,\bigl\vert (n+1)^{m}T^{n}(T-I)^{m} (x)\, -\, 
n^{m}T^{n-1}(T-I)^{m} (x)\bigr\vert^2\Bigr)^{\frac{1}{2}}}_p\,\lesssim\,\norm{x}_p,
\end{equation}
that we record here for further use in Section 4. 

For its proof it will be convenient to set 
\begin{equation}\label{3Delta}
\Delta_{n}^{m} = T^{n}(T-I)^{m}\qquad\hbox{and}\qquad B_{n}^{m} =(n+1)^m \Delta_{n}^{m}
\end{equation}
for any integers $m,n\geq 0$. We fix some $m\geq 1$ and $x\in L^p(\Omega)$. Then we have
\begin{align*}
B_n^m(x)-B_{n-1}^m(x)\, &=\ \bigl((n+1)^m T - n^m\bigr) T^{n-1}(T-I)^m x\\
&=\, (n+1)^m T^{n-1} (T-I)^{m+1} x\, +\, \bigl((n+1)^m -n^m\bigr) T^{n-1}(T-I)^m x
\end{align*}
for any $n\geq 1$. Consequently,
\begin{align*}
\bigl\vert 
B_n^m(x)-B_{n-1}^m(x)\bigr\vert^2\,&\leq\,2\bigl( (n+1)^{2m} \bigl\vert
\Delta_{n-1}^{m+1}(x)\bigr\vert^2\, +\, 
\bigl((n+1)^m -n^m\bigr)^2\bigl\vert
\Delta_{n-1}^{m}(x)\bigr\vert^2\bigr)\\
&\lesssim\, n^{2m}  \bigl\vert
\Delta_{n-1}^{m+1}(x)\bigr\vert^2\, +\, n^{2(m-1)} \bigl\vert
\Delta_{n-1}^{m}(x)\bigr\vert^2.
\end{align*}
Summing up, we obtain that 
$$
\sum_{n=1}^{\infty} n\,\bigl\vert 
B_n^m(x)-B_{n-1}^m(x)\bigr\vert^2\,\lesssim\, 
\sum_{n=1}^{\infty}  n^{2m+1}  \bigl\vert
\Delta_{n-1}^{m+1}(x)\bigr\vert^2\, +\, 
\sum_{n=1}^{\infty}   n^{2m-1}  \bigl\vert
\Delta_{n-1}^{m}(x)\bigr\vert^2\,.
$$
Applying (\ref{3DiscreteSF2}) twice, with $m$ and $m+1$, we deduce the estimate 
(\ref{3DiscreteSF3}).

\smallskip (3)
Set 
\begin{equation}\label{6Stein}
M_n(T)=\,\frac{1}{n+1}\,\sum_{k=0}^{n} T^{k}
\end{equation}
for any $n\geq 0$. Then we have
\begin{equation}\label{3Stein}
\Bignorm{\Bigl(\sum_{n=0}^{\infty}(n+1)\bigl\vert M_{n+1}(T)(x) - M_{n}(T)(x) \bigr\vert^2\Bigr)^{\frac{1}{2}}}_p\,\lesssim \, \norm{x}_p
\end{equation}
for $x\in L^p(\Omega)$. By an entirely classical averaging
argument, one obtains this estimate as a consequence of (\ref{3DiscreteSF2}). We skip the details.

Inequality (\ref{3Stein}) plays a key role in \cite[Section 5]{S2}, where it is shown in the case when 
$T$ acts as a contraction $L^q(\Omega)\to L^q(\Omega)$ for any $1\leq q\leq\infty$ and its $L^2$-realization
is a positive selfadjoint operator.
\end{remark}

\begin{remark}\label{3Remark2} For any $\gamma\in\bigl(0,\frac{\pi}{2}\bigr)$, 
let $\P_\gamma\subset C(B_\gamma)$ be the algebra 
$\P$ regarded as a subspace of $C(B_\gamma)$, the commutative $C^*$-algebra of all complex valued continuous 
functions on the compact set $B_\gamma$. Let $u_\gamma\colon \P_\gamma\to B(L^p(\Omega))$ be the natural
functional calculus map, defined by 
$$
u_\gamma(F)=F(T).
$$

\smallskip
(a) Proposition \ref{3Matrix} means that for some $\gamma\in\bigl(0,\frac{\pi}{2}\bigr)$, 
the map $u_\gamma$ is $\ell_2$-completely bounded in the sense
of \cite{Sim} (see also \cite[Section 4]{KL}). In the case $p=2$, this means that $u_\gamma$ is completely
bounded. 

\smallskip
(b) If we restrict (\ref{3Matrix1}) to diagonal matrices, we readily obtain that whenever $(F_n)_{n\geq 1}$ is a bounded
sequence of $\P_\gamma$, then the set 
$\{F_n(T)\,: n\geq 1\}$ is $R$-bounded. Applying this property 
to the two sequences
$$
z\mapsto z^n\qquad \hbox{and}\qquad z\mapsto n(z^{n}-z^{n-1}),
$$
we deduce that any analytic contractively regular $T\colon L^p(\Omega)\to L^p(\Omega)$ is an
$R$-analytic power bounded operator (in the sense of Section 2).
This result is due to Blunck (see \cite[Thm. 1.1 and Thm. 1.2]{Bl1}).
\end{remark}

\medskip
\section{Maximal theorems on $L^p(\Omega)$.}
The general maximal theorem we aim at proving is the following.
The case $m=0$, which gives (\ref{1Max0}), is of particular interest.

\begin{theorem}\label{4Main}
Let $T\colon L^p(\Omega)\to L^p(\Omega)$ be an analytic 
contractively regular operator.
Then for any integer $m\geq 0$, there is a constant $C\geq 0$ such that 
\begin{equation}\label{4Max}
\Bignorm{\sup_{n\geq 0}\, (n+1)^m\bigl\vert T^n(T-I)^m(x)\bigr\vert}_p\,\leq\, C\,
\norm{x}_p,\qquad x\in L^p(\Omega).
\end{equation}
\end{theorem}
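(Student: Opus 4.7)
My plan is to deduce (\ref{4Max}) from the square function estimates of Section~3. The cases $m\geq 1$ and $m=0$ behave quite differently, and the $m=0$ case is the main obstacle.

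For $m\geq 1$, I would set $a_n = (n+1)^m T^n(T-I)^m(x)$ and begin from the telescoping identity $|a_n|^2 - |a_0|^2 = \sum_{k=0}^{n-1}(|a_{k+1}|^2-|a_k|^2)$, combined with the elementary pointwise bound $|a_{k+1}|^2-|a_k|^2 \leq (|a_k|+|a_{k+1}|)|a_{k+1}-a_k|$. Taking the supremum in $n$ and applying pointwise Cauchy-Schwarz in $\ell^2$ with the weight $w_k = k+1$ gives
$$
\sup_{n\geq 0}|a_n|^2 \leq |a_0|^2 + \Bigl(\sum_k (k+1)|a_{k+1}-a_k|^2\Bigr)^{\!1/2} \Bigl(\sum_k \frac{(|a_k|+|a_{k+1}|)^2}{k+1}\Bigr)^{\!1/2}.
$$
Using $\bignorm{\sup_n|a_n|}_p^2 = \bignorm{\sup_n|a_n|^2}_{p/2}$ and H\"older's inequality $\bignorm{fg}_{p/2}\leq\bignorm{f}_p\bignorm{g}_p$ (valid for any $p>0$), the task reduces to bounding the two square functions on the right-hand side in $L^p$ by $\norm{x}_p$. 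But the first is precisely the left-hand side of (\ref{3DiscreteSF3}) after a reindexing, while the second, via the identity $(k+1)^{2m}/(k+1) = (k+1)^{2m-1}$, is controlled by the left-hand side of (\ref{3DiscreteSF2}). For the range $1<p<2$ I would use instead the pointwise form $\sup_n|a_n| \leq |a_0| + S^{1/2}$ with $S$ denoting the above sum, followed by the triangle inequality in $L^p$.

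The case $m=0$ cannot be reached by this scheme, since the Cauchy-Schwarz split would require a bound on $\bignorm{(\sum_k|T^{k+1}(x)|^2/(k+1))^{1/2}}_p$, which fails on $N(I-T)$ and has no counterpart in Section~3. To circumvent this, I would reduce to the $m=1$ case just established, combined with Akcoglu's maximal ergodic inequality (\ref{1Ergodic}). Setting $\phi(x) = \sup_{n\geq 0}(n+1)|T^n(T-I)(x)|$, which satisfies $\norm{\phi(x)}_p\lesssim\norm{x}_p$ by the $m=1$ case, the telescoping $T^n(x)-T^k(x)=\sum_{j=k}^{n-1}T^j(T-I)(x)$ gives the pointwise bound $|T^n(x)-T^k(x)|\leq C\phi(x)\log\bigl((n+1)/(k+1)\bigr)$. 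Averaging in $k$ via $T^n(x) - M_n(T)(x) = (n+1)^{-1}\sum_{k=0}^n(T^n(x) - T^k(x))$ together with the Stirling-type estimate $\sum_{k=0}^n \log((n+1)/(k+1))\leq C(n+1)$ yields the pointwise control $\sup_{n\geq 0}|T^n(x)-M_n(T)(x)|\leq C\phi(x)$. Since the contractive regularity of $T$ gives domination by a positive contraction $S$, Akcoglu's theorem applied to $S$ yields $\bignorm{\sup_n|M_n(T)(x)|}_p\lesssim\norm{x}_p$, and combining this with the previous pointwise bound closes the $m=0$ case.
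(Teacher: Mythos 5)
Your proof is correct, but it takes a genuinely different route from the paper's. The paper argues by induction on $m$, using Abel summation identities ((\ref{4IPP1})--(\ref{4IPP3})) to write $(n+1)^mT^n(T-I)^m$ as an ergodic-type average plus a weighted sum of consecutive differences; the averages are controlled through the auxiliary weak maximal inequality (\ref{4WeakMax}) (seeded at $m=0$ by the maximal ergodic inequality (\ref{1Ergodic}) for contractively regular operators, quoted from \cite{Pe,CRW}), and the differences through Cauchy--Schwarz combined with (\ref{3DiscreteSF2}) and (\ref{3DiscreteSF3}). You instead dispose of all $m\geq 1$ at once by the telescoping-of-squares device, whose two factors after the weighted Cauchy--Schwarz are exactly the square functions of (\ref{3DiscreteSF3}) and (\ref{3DiscreteSF2}); your treatment of the quasi-norm issue for $1<p<2$ via the pointwise bound $\sup_n|a_n|\leq|a_0|+F^{1/2}G^{1/2}$ is also fine. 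For $m=0$ you correctly observe that the scheme breaks down (the weight $\sum_k(k+1)^{-1}|T^k(x)|^2$ diverges on $N(I-T)$) and you compare $T^n$ with the averages $M_n(T)$ using the $m=1$ maximal function and the estimate $\sum_{k=0}^n\log\bigl((n+1)/(k+1)\bigr)\lesssim n+1$; the paper reaches the same conclusion more directly by applying Cauchy--Schwarz to (\ref{4IPP1}) and invoking (\ref{3DiscreteSF2}) with $m=1$. Both proofs consume the same inputs, namely the square function estimates of Section 3 and the maximal ergodic inequality for contractively regular operators (which you rederive from Akcoglu's theorem via the domination $|T^n(x)|\leq S^n(|x|)$ by a positive contraction $S$, a legitimate alternative to citing \cite{Pe} or \cite{CRW}). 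What yours buys is the elimination of the induction and of the intermediate inequality (\ref{4WeakMax}); what the paper's buys is a decomposition phrased entirely through operator identities and square functions, which is the form that carries over to the noncommutative setting of Section 5, where pointwise manipulations of $|a_n|^2$ are no longer available.
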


\begin{proof}
We will use classical `integration by parts' arguments and induction. Recall the notation from 
(\ref{3Delta}). 
For any $m\geq 1$, 
let us consider the estimate
\begin{equation}\label{4WeakMax}
\Bignorm{\sup_{n\geq 0}\,\frac{1}{n+1}\,\Bigl\vert  
\sum_{k=0}^{n} B_{k}^{m}(x)\Bigr\vert}_p\,\lesssim\,\norm{x}_p,\qquad x\in L^p(\Omega).
\end{equation}
This is clearly weaker than (\ref{4Max}), however we will need to use it explicitly later on.
For clarity we will write (\ref{4Max})$_m$ and (\ref{4WeakMax})$_m$ instead of (\ref{4Max})
and (\ref{4WeakMax}) in this proof.

For any $n\geq 1$, we have
$$
\sum_{k=1}^{n} k\bigl(T^k -T^{k-1}\bigr)\, =\,\sum_{k=1}^{n} kT^{k}\, -\,\sum_{k=0}^{n-1} (k+1) T^{k}\,  =\, n T^n 
\,-\,\sum_{k=0}^{n-1} T^{k}\,,
$$
hence
\begin{equation}\label{4IPP1}
T^n\,=\,
\frac{1}{n}\,\sum_{k=0}^{n-1} T^{k}\, +\, \frac{1}{n}\,\sum_{k=1}^{n} k\bigl(T^{k} - T^{k-1}\bigr)\,.
\end{equation}
By Cauchy-Schwarz, we deduce that for any $x\in L^p(\Omega)$,
$$
\bigl\vert T^n(x)\bigr\vert\,  \leq\,
\frac{1}{n}\,\Bigl\vert \sum_{k=0}^{n-1} T^{k}(x)\Bigr\vert
\, +\,
\Bigl(\sum_{k=1}^{n} k\bigl\vert T^{k}(x) - T^{k-1}(x)\bigr\vert^{2}\Bigr)^{\frac{1}{2}}.
$$
According to \cite{Pe} or \cite{CRW} (which generalized Akcoglu's Theorem to contractively regular 
operators), $T$ satisfies (\ref{1Ergodic}). Hence applying (\ref{3DiscreteSF2}) with $m=1$, we obtain (\ref{4Max})$_0$. Appealing to (\ref{4IPP1}) again, we immediatly deduce that
(\ref{4WeakMax})$_1$ holds true as well.

Now let $m\geq 1$.
Arguing as above we have 
\begin{equation}\label{4IPP2}
B_{n}^{m} \,=\,\frac{1}{n}\,\sum_{k=0}^{n-1} B_{k}^{m}\, +\, 
\frac{1}{n}\,\sum_{k=1}^{n} k\bigl(B_{k}^{m} - B_{k-1}^{m}\bigr).
\end{equation}
Also we have
\begin{align*}
\sum_{k=0}^{n} B_{k}^{m+1} \, &=\, (T-I)^{m}\,\sum_{k=0}^{n} (k+1)^{m+1}\bigl(T^{k+1} -T^{k}\bigr)\\
&=\, (T-I)^{m}\,\Bigl((n+1)^{m+1} T^{n+1}\,-\, \sum_{k=0}^{n} \bigl((k+1)^{m+1} -k^{m+1}\bigr) T^k\Bigr),
\end{align*}
hence
\begin{equation}\label{4IPP3}
\frac{1}{n+1}\,\sum_{k=0}^{n} B_k^{m+1}\,=\,\Bigl(\frac{n+1}{n+2}\Bigr)^{m+1} B_{n+1}^{m}\, -\,
\frac{1}{n+1}\,\sum_{k=0}^{n}\bigl((k+1)^{m+1} -k^{m+1}\bigr) \Delta_{k}^{m} .
\end{equation}
By Cauchy-Schwarz,
$$
\frac{1}{n}\,\Bigl\vert \sum_{k=1}^{n} k\bigl(B_{k}^{m}(x) - B_{k-1}^{m}(x)\bigr)\Bigr\vert\,\leq\,
\Bigl(\sum_{k=1}^{n} k \bigl\vert B_{k}^{m}(x) - B_{k-1}^{m}(x) \bigr\vert^2\Bigr)^{\frac{1}{2}},
$$
hence (\ref{4WeakMax})$_m$ implies (\ref{4Max})$_m$ by (\ref{4IPP2}) and Remark \ref{3Remark1}. Likewise,
\begin{align*}
\frac{1}{n+1}\,\Bigl\vert\sum_{k=0}^{n}\bigl((k+1)^{m+1} & -k^{m+1}\bigr)\Delta_{k}^{m}(x)
\Bigr\vert\\ 
& \leq\,\frac{1}{n+1}\,\biggl(\sum_{k=0}^{n}\frac{\bigl((k+1)^{m+1} -k^{m+1}\bigr)^2}{(k+1)^{2m-1}}\biggr)^{\frac{1}{2}}
\,\Bigl( \sum_{k=0}^{n} (k+1)^{2m-1}\bigl\vert\Delta_{k}^{m}(x)\bigr\vert^2\Bigr)^{\frac{1}{2}}\\
& \lesssim \Bigl(\sum_{k=0}^{n} (k+1)^{2m-1}\bigl\vert \Delta_{k}^{m}(x)\bigr\vert^2 \Bigr)^{\frac{1}{2}},
\end{align*}
hence 
(\ref{4WeakMax})$_{m+1}$ and (\ref{4Max})$_{m}$ are equivalent by (\ref{4IPP3}) and (\ref{3DiscreteSF2}). Thus 
(\ref{4Max})$_m$ holds true for any $m\geq 0$ by induction.
\end{proof}

%
%
%
%
%

Theorem \ref{4Main} is a generalization of \cite{S1}. 
In that paper, (\ref{4Max}) is established for an operator $T$ 
which is a positive contraction $L^q(\Omega)\to L^q(\Omega)$ for any 
$1\leq q\leq\infty$ whose $L^2$-realization is a positive selfadjoint operator. 
Clearly the $L^p$-realization of such an operator satisfies the assumptions of Theorem \ref{4Main}.
Indeed if $T\colon L^2(\Omega)\to L^2(\Omega)$ is a positive selfadjoint operator, then 
it is analytic by spectral representation. Hence $T\colon L^p(\Omega)\to L^p(\Omega)$ is analytic for 
any $1<p<\infty$ by \cite[Thm 1.1]{Bl2}.

The following is an analog of Theorem \ref{4Main} for continuous semigroups.

\begin{corollary}\label{4Sgp}
Let $(T_t)_{t\geq 0}$ be a bounded analytic semigroup on $L^p(\Omega)$, and assume
that $\rnorm{T_t}\leq 1$ for any $t\geq 0$. Then for any integer $m\geq 0$, we have an estimate
\begin{equation}\label{4IMm}
\biggnorm{\sup_{t>0} t^m\Bigl\vert\frac{\partial^m}{\partial t^m}\,\bigl(T_t(x)\bigr)\Bigr\vert}_p\,
\lesssim \, C\norm{x}_p,\qquad 
x\in L^p(\Omega).
\end{equation}
\end{corollary}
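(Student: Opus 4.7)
The plan is to treat the cases $m\geq 1$ and $m=0$ separately. The starting point in both cases is Proposition \ref{2Weis}, which ensures that $A$ has a bounded $\HI(\Sigma_\theta)$ functional calculus for some $\theta<\frac{\pi}{2}$, so that the continuous square function estimate of Proposition \ref{2SFE} is available for every $\varphi\in\HI_0(\Sigma_\theta)$. I will apply it to the functions $\varphi_k(z)=z^ke^{-z}$, $k\geq 1$.

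For $m\geq 1$, I would first take $x\in D(A^m)$ (dense in $L^p(\Omega)$) and set $g(u)=(uA)^mT_ux=\varphi_m(uA)x$, which equals $(-1)^m u^m\frac{\partial^m}{\partial u^m}T_ux$. Since $A^mT_ux\to A^mx$ while $u^m\to 0$, we have $g(u)\to 0$ in $L^p$ as $u\to 0^+$, hence pointwise a.e.\ along a subsequence. The fundamental theorem of calculus then yields
\[
|g(t)|^2\,=\,2\operatorname{Re}\int_0^t g'(u)\overline{g(u)}\,du\qquad\text{a.e.\ on }\Omega,\ t>0.
\]
A direct computation shows $ug'(u)=m\,g(u)-\varphi_{m+1}(uA)x$, so applying the Cauchy--Schwarz inequality pointwise in the measure $du/u$ gives
\[
\sup_{t>0}|g(t)|^2\,\leq\,2\,S_1(x)\,S_2(x),
\]
where $S_1(x)=\bigl(\int_0^\infty|g(u)|^2\,du/u\bigr)^{1/2}$ and $S_2(x)=\bigl(\int_0^\infty|ug'(u)|^2\,du/u\bigr)^{1/2}$ are both $\lesssim\norm{x}_p$ in $L^p$ by Proposition \ref{2SFE} applied to $\varphi_m$ and $\varphi_{m+1}$. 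The generalized H\"older inequality $\norm{S_1S_2}_{p/2}\leq\norm{S_1}_p\norm{S_2}_p$, valid for every $p>0$, then yields $\bignorm{\sup_t|g(t)|}_p\lesssim\norm{x}_p$, and a routine density argument extends (\ref{4IMm}) to all $x\in L^p(\Omega)$.

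For $m=0$, (\ref{4IMm}) coincides with the continuous semigroup maximal inequality (\ref{1SG}). I would exploit the Fubini identity
\[
T_tx\,=\,M(t)x\,-\,\frac{1}{t}\int_0^t uAT_ux\,du,\qquad M(t)x:=\frac{1}{t}\int_0^t T_sx\,ds,
\]
which follows by integrating the formula $T_tx-T_sx=-\int_s^t AT_ux\,du$ over $s\in[0,t]$ and interchanging the order of integration. The maximal function of $M(t)x$ is controlled in $L^p$ by the continuous Hopf--Akcoglu ergodic theorem for contractively regular semigroups (the continuous analog of (\ref{1Ergodic})). For the second term, Cauchy--Schwarz in the measure $du/u$ gives
\[
\sup_{t>0}\frac{1}{t}\int_0^t u|AT_ux|\,du\,\leq\,\frac{1}{\sqrt{2}}\Bigl(\int_0^\infty|uAT_ux|^2\,\frac{du}{u}\Bigr)^{\frac{1}{2}},
\]
and the right-hand side is $\lesssim\norm{x}_p$ in $L^p$ by Proposition \ref{2SFE} applied to $\varphi_1$.

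The main technical step to handle carefully is the passage $g(u)\to 0$ pointwise a.e.\ as $u\to 0^+$ in the $m\geq 1$ case, which is obtained by extracting a subsequence of the $L^p$-convergence on the dense subspace $D(A^m)$; together with the final density extension from $D(A^m)$ to $L^p(\Omega)$, this is the only place where some care beyond routine manipulation is required.
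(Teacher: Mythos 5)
Your proof is correct, but it follows a genuinely different route from the one the paper gives as the official proof of Corollary \ref{4Sgp}. The paper deduces the continuous maximal inequality from the discrete one: it first upgrades the square function estimates of Theorem \ref{3DiscreteSF} to hold \emph{uniformly} for all the operators $T_t$, concludes via the proof of Theorem \ref{4Main} that the discrete maximal inequality (\ref{4Max}) holds uniformly in $t$, and then discretizes, approximating each $t_j$ by $n_{jk}/k$ and passing to the limit in $L^p(\Omega;\ell^\infty_N)$. Your argument instead works directly in continuous time: for $m\geq 1$ it is Stein's classical device of writing $\vert g(t)\vert^2=2\,{\rm Re}\int_0^t g'\overline{g}\,du$ and dominating the maximal function by a product of two continuous square functions controlled by Propositions \ref{2Weis} and \ref{2SFE}, and for $m=0$ it combines the averaging identity with the Hopf--Akcoglu/Fendler maximal ergodic theorem and one more square function. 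This is precisely the ``alternative proof'' that the authors sketch in the remark immediately following the corollary, where they cite Fendler for (\ref{3Fendler}) and defer the rest to \cite[pp.\ 73--76]{S2}; your write-up has the merit of actually carrying out the Stein argument (including the identity $ug'(u)=mg(u)-\varphi_{m+1}(uA)x$ and the generalized H\"older step) rather than citing it, at the cost of two routine but genuine technicalities you correctly flag: realizing the a.e.\ pointwise fundamental theorem of calculus for the $L^p$-valued function $g$ on $D(A^m)$, and the density extension, which works because the finite maxima $\max_{j\leq N}\vert\varphi_m(t_jA)x\vert$ pass to the limit (using the uniform boundedness of $\varphi_m(tA)$) before one takes a countable supremum. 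What the paper's route buys is uniformity of treatment (the continuous case is literally a corollary of the discrete machinery already built); what yours buys is independence from Sections 3--4 beyond Propositions \ref{2Weis} and \ref{2SFE}.
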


\begin{proof}
Let $-A$ be the generator of 
$(T_t)_{t\geq 0}$. According to Proposition \ref{2Weis}, it admits a bounded
$H^{\infty}(\Sigma_{\theta_0})$ functional calculus for some $\theta_0<\frac{\pi}{2}$. 
Let $\theta\in\bigl(\theta_0,\frac{\pi}{2}\bigr)$. Arguing as in 
Proposition \ref{3Matrix} and Theorem \ref{3DiscreteSF} (1), we obtain the
existence of a constant $C\geq 1$
such that for any sequence $(f_n)_{n\geq 1}$ of functions in 
$H^{\infty}_0(\Sigma_{\theta_0})$ and any $x\in L^p(\Omega)$, we have
$$
\Bignorm{\Bigl(\sum_{n=1}^{\infty}\bigl\vert f_n(A)x\bigr\vert^2\Bigr)^{\frac{1}{2}}}_p\,
\leq\, C\,\norm{x}_p\,\sup\Bigl\{ 
\Bigl(\sum_{n=1}^{\infty}\bigl\vert f_n(z)\bigr\vert^2\Bigr)^{\frac{1}{2}}\, :\, z\in \Sigma_\theta\Bigr\}.
$$
Then arguing as in Theorem \ref{3DiscreteSF} (2), we deduce that for any $m\geq 1$,
there is a constant $C_m\geq 1$ such that for any $t>0$ and for any 
$x\in L^p(\Omega)$,
$$
\Bignorm{\Bigl(\sum_{n=0}^{\infty} (n+1)^{2m-1}\bigl\vert T_t^{n}(T_t-I)^m(x)
\bigr\vert^2\Bigr)^{\frac{1}{2}}}_p\,\leq\, C_m\,\norm{x}_p.
$$
In other words, the operators $T_t$ satisfy (\ref{3DiscreteSF2}) uniformly.
The above proof of Theorem \ref{4Main} therefore shows that they satisfy
(\ref{4Max}) uniformly.

Let $t_1, t_2,\ldots, t_N$ be positive real numbers. For any $j=1,\ldots,N$ and $k\geq 1$, let
$n_{jk}$ be the integral part of $kt_j+1$ and let $t_{jk}=n_{jk}/k$, so that $t_{kj}\geq t_j$
and $t_{kj}\to t_j$ when $k\to\infty$. It follows from above that
we have an estimate
$$
\Bignorm{\Bigl( (n_{jk}+1)^m T_{\frac{1}{k}}^{n_{jk}}\bigl(T_{\frac{1}{k}} -I\bigr)^m (x)\Bigr)_{1\leq j\leq N}}_{L^p(\Omega;\ell^{\infty}_N)}\, \leq \, K\norm{x}_p,
$$
for some constant $K\geq 1$ neither depending on $x$, $k$ or the $t_j$'s. Letting $k\to \infty$, we deduce that
$$
\Bignorm{\Bigl( t_j^m  (-A)^m T_j(x)\Bigr)_{1\leq j\leq N}}_{L^p(\Omega;\ell^{\infty}_N)}\, \leq \, K\norm{x}_p.
$$
Clearly this uniform estimate implies (\ref{4IMm}).
\end{proof}

\begin{remark} 
Here is an alternative proof of Corollary \ref{4Sgp} not using the discrete case.
For any real $t>0$, consider the average operator $M_t\in B(L^p(\Omega))$ defined by letting
$$
M_t(x)\,=\,\frac{1}{t}\int_{0}^{t} T_u(x)\, du
$$
for any $x\in L^p(\Omega)$.
Since $\rnorm{T_t}\leq 1$ for any $t\geq 1$, 
it follows from \cite{Fe} that we have an estimate 
\begin{equation}\label{3Fendler}
\bignorm{\sup_{t>0}\bigl\vert M_t(x)\bigr\vert}_p\,\lesssim\,\norm{x}_p,\qquad x\in L^p(\Omega).
\end{equation}
For any integer $m\geq 1$, let $\varphi_m$ be the analytic function defined by $\varphi_m(z)=z^me^{-z}$. Then 
$\varphi_m$ belongs to $\HI_0(\Sigma_\theta)$ for any $\theta\in\bigl(0,\frac{\pi}{2}\bigr)$. Hence according to 
Propositions \ref{2SFE} and \ref{2Weis}, the square function estimate (\ref{2SFEbis}) 
holds for $\varphi=\varphi_m$.
For any real $t>0$, we have
$$
\varphi_m(tA)(x) =  t^mA^me^{-tA}(x) = (-1)^m t^m\,\frac{\partial^m}{\partial t^m}\,\bigl(T_t(x)\bigr).
$$
Hence we obtain estimates 
$$
\Bignorm{\Bigl(\int_{0}^{\infty} t^{2m-1} \Bigl\vert 
\frac{\partial^m}{\partial t^m}\,\bigl(T_t(x)\bigr)
\Bigr\vert^2\, \frac{dt}{t}\,\Bigr)^{\frac{1}{2}}}_p\, \lesssim\, \norm{x}_p
$$
for any $m\geq 1$. Then Stein's arguments in \cite[pp. 73-76]{S2} show that 
(\ref{3Fendler}) together with these estimates imply that (\ref{4IMm}) holds true for any $m\geq 0$.
\end{remark}

\medskip
\section{Maximal theorems on noncommutative $L^p$-spaces}
In this section we will partly extend the results established 
in the previous one, in the light of the recent work \cite{JX}.
We start with a few preliminaries on semifinite 
noncommutative $L^p$-spaces. 

Let $M$ be a von Neumann algebra equipped with a normal semifinite faithful trace $\tau$.
Let $M_+$ be the set of all positive elements of $M$ and let 
$S_+$ be the set of all $x$ in $M_+$ such that 
$\tau(x)<\infty$. Then let $S$ be the linear span of $S_+$. For any $1\leq p<\infty$, define 
$$
\norm{x}_p\,=\,\bigl(\tau(\vert x\vert^p)\bigr)^{\frac{1}{p}},\qquad x\in S,
$$
where $\vert x\vert =(x^*x)^{\frac{1}{2}}$ is the modulus of $x$. 
Then $(S,\norm{\ }_p)$ is a normed space. The corresponding 
completion is the noncommutative $L^p$-space associated with $(M,\tau)$ and is
denoted by $L^p(M)$. By convention we set $L^\infty(M)=M$, equipped with the operator norm.
The elements of $L^p(M)$ can also be described as measurable operators with respect to
$(M,\tau)$. Further multiplication of measurable operators leads to contractive
bilinear maps $L^{p}(M)\times L^{q}(M)\to L^{r}(M)$ for any $p,q,r$ such that $p^{-1} + q^{-1}= r^{-1}$ 
(noncommutative H\"older's inequality). Using trace duality, we then have 
$L^{p}(M)^*\,=\, L^{p'}(M)$ isometrically for any $1\leq p<\infty$. Moreover, complex interpolation
yields
\begin{equation}\label{5Inter}
L^{p}(M)\,=\, [L^\infty(M), L^1(M)]_{\frac{1}{p}}
\end{equation}
for any $1\leq p< \infty$. We refer the reader to \cite{PX} for details and complements.

Maximal functions in the noncommutative setting require a specific definition. Indeed,
$\sup_{n}\vert x_n\vert$ does not make any sense for a sequence $(x_n)_n$ of operators.
This difficulty is overcome by considering the spaces 
$L^p(M;\ell^\infty)$, which are the noncommutative analogs of
the usual Bochner spaces $L^p(\Omega;\ell^\infty)$. Given $1\leq p<\infty$, 
$L^p(M;\ell^\infty)$ is defined as the space of all sequences
$(x_n)_{n\geq 0}$ in $L^p(M)$ for which there exist $a,b\in L^{2p}(M)$ and a bounded 
sequence $(z_n)_{n\geq 0}$ in $M$ such that 
\begin{equation}\label{5Factor}
x_n=az_nb,\qquad n\geq 0. 
\end{equation}
For such a sequence, set
$$
\bignorm{(x_n)_{n\geq 0}}_{L^p(M;\ell^\infty)}\, =\, \inf\bigl\{\norm{a}_{2p}\sup_n\norm{z_n}\norm{b}_{2p}\},
$$
where the infimum runs over all possible factorizations of 
$(x_n)_{n\geq 0}$ in the form (\ref{5Factor}). This is a norm and $L^p(M;\ell^\infty)$ is a Banach space.
These spaces were first introduced by Pisier \cite{P} in the case when $M$ is hyperfinite and by Junge \cite{J} in the general case. We will adopt the convention in \cite{JX} that the norm 
$\norm{(x_n)_{n\geq 0}}_{L^p(M;\ell^\infty)}$ is denoted by
\begin{equation}\label{5Sup}
\bignorm{{\sup_{n\geq 0}}^{+} x_n}_p.
\end{equation}
We warn the reader that this suggestive notation
should be treated with care.
It is used for possibly non positive operators and
$\bignorm{{\displaystyle{\sup_{n\geq 0}}^{+} x_n}}_p\not=\bignorm{\displaystyle{{\sup_{n\geq 0}}^{+} \vert x_n\vert}}_p$
in general. However it has an intuitive description in the positive case, as
observed in \cite[p. 392]{JX}: a positive sequence 
$(x_n)_{n\geq 0}$ of $L^p(M)$ belongs to 
$L^p(M;\ell^\infty)$ if and only if there exists a positive 
$a\in L^p(M)$ such that $x_n\leq a$ for any $n\geq 0$ and in this case,
\begin{equation}\label{5Domin}
\bignorm{{\sup_{n\geq 0}}^{+} x_n}_p\,=\,\inf\bigl\{\norm{a}_p\, :\, a\in L^p(M),\ a\geq 0
\quad\hbox{and}\quad x_n\leq a\ \hbox{for any } n\geq 0\bigr\}.
\end{equation}

\bigskip
Let $T\colon M\to M$ be a contraction
We say that it is an absolute contraction if  
its restriction to $L^1(M)\cap M$ extends to a contraction
$L^1(M)\to L^1(M)$. In this case, it extends (by interpolation)
to a contraction on $L^p(M)$ for any $1\leq p\leq \infty$. We let
$T_p\colon L^p(M)\to L^p(M)$ denote the resulting operator.

\begin{lemma}\label{5Analytic} Let $1<p,q<\infty$. The operator 
$T_p$ is analytic if and only if $T_q$ is analytic.
\end{lemma}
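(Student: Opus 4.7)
By symmetry, it suffices to show that $T_p$ analytic implies $T_q$ analytic. I will work through the semigroup characterization from Section~2: $T_r$ is analytic if and only if $A_r := I - T_r$ is sectorial of type strictly less than $\frac{\pi}{2}$ on $L^r(M)$, equivalently the semigroup $T_t^{(r)} := e^{-tA_r}$, $t\geq 0$, is a bounded analytic semigroup on $L^r(M)$. Since $T$ is an absolute contraction, $\{T_r\}_{1\leq r\leq \infty}$ is a consistent family of contractions and $\norm{A_r}\leq 2$. The semigroup is therefore given by the entire power series
$$T_t^{(r)} \,=\, e^{-t}\sum_{n\geq 0}\frac{(tT_r)^n}{n!}\,,$$
so the family $\{T_t^{(r)}\}_r$ is itself consistent and satisfies $\norm{T_t^{(r)}}_{L^r\to L^r}\leq 1$ for every $t\geq 0$ and every $1\leq r\leq \infty$. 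The lemma is thus reduced to the following transfer principle: a consistent semigroup that is contractive on $L^r(M)$ for every $1\leq r\leq\infty$ and bounded analytic on $L^p(M)$ for some $1<p<\infty$ is automatically bounded analytic on $L^q(M)$ for every $1<q<\infty$.

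This transfer will be obtained by Stein-type complex interpolation applied to the analytic family $z\mapsto T_z$. Bounded analyticity on $L^p$ provides an extension of the semigroup to a sector $\Sigma_\alpha$ with $\norm{T_z}_{L^p\to L^p}\leq K$, and by the entire-series formula this family is also defined, consistently, on every $L^r$. On the positive real axis one has $\norm{T_t}_{L^r\to L^r}\leq 1$ for every $r\in[1,\infty]$, while on the boundary rays $\arg(z)=\pm\alpha'$ (for $\alpha'<\alpha$) one has $\norm{T_z}_{L^p\to L^p}\leq K$. Using the complex interpolation identity $L^q(M)=[L^{r_0}(M),L^{r_1}(M)]_\theta$ recalled at (\ref{5Inter}), and reparametrizing the sector as a strip via $z=e^w$, Stein's analytic interpolation produces uniform bounds $\norm{T_z}_{L^q\to L^q}\leq K_q$ on a subsector $\Sigma_{\alpha''}$, which establishes bounded analyticity of $(T_t^{(q)})$ on $L^q(M)$ and hence the analyticity of $T_q$.

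The main obstacle is the careful implementation of the Stein interpolation so as to cover the whole range $q\in(1,\infty)$ starting from a single index $p$: the cases $q<p$ and $q>p$ must be handled separately, with the auxiliary endpoint chosen from $\{1,\infty\}$ (where only the trivial contractive bound is available) or from the dual index (using that passage to the Banach adjoint exchanges $L^p(M)$ and $L^{p'}(M)$ and preserves the sectorial/bounded-analytic structure). One expects that pivoting around $L^2(M)$ and iterating the interpolation step exhausts the full scale $(1,\infty)$, but the precise bookkeeping of interpolation angles is where the care is needed.
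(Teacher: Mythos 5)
Your reduction breaks down at its very first step: the asserted equivalence ``$T_r$ is analytic if and only if $A_r=I-T_r$ is sectorial of type $<\frac{\pi}{2}$'' is false; only the forward implication (which is what Section 2 of the paper actually states) holds. Discrete analyticity of a power bounded operator is equivalent to the Ritt condition (\ref{2Ritt}), which requires the resolvent bound $\vert\lambda-1\vert\,\norm{R(\lambda,T)}\leq K$ on the whole exterior of the unit disc --- in particular near every point of the unit circle other than $1$ --- whereas sectoriality of $I-T$ only controls the resolvent outside a sector with vertex at $1$. That extra spectral information is exactly what is destroyed when you pass to the continuous semigroup (\ref{2Tt}). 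Concretely, $T=-I$ is an absolute contraction, the associated semigroup $T_t=e^{-t}e^{tT}=e^{-2t}I$ is a bounded analytic (indeed scalar) semigroup on every $L^r(M)$, and yet $n(T^n-T^{n-1})=2(-1)^nnI$ is unbounded, so $T_r$ is analytic for no $r$. Hence even a fully rigorous Stein-interpolation transfer of bounded analyticity of $(T_t)_{t\geq 0}$ from $L^p(M)$ to $L^q(M)$ --- which is indeed feasible and is essentially \cite[Prop. 5.4]{JLX}, invoked later in the paper for the continuous analog of this lemma --- cannot by itself yield the conclusion: you would still have to establish $\sigma(T_q)\subset\Ddb\cup\{1\}$ together with the accompanying resolvent estimates, and nothing in your argument addresses this.

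The paper's own proof is a citation of Blunck's extrapolation theorem \cite[Thm. 1.1]{Bl2}, together with the observation that his interpolation argument carries over to the noncommutative setting once real-variable interpolation is replaced by the complex interpolation identity (\ref{5Inter}); whatever its internals, that argument necessarily produces the Ritt condition on $L^q(M)$ directly, which your route cannot. Note also that the lemma is genuinely nontrivial: naive Riesz--Thorin interpolation of $\norm{T^n-T^{n-1}}_{p\to p}\lesssim 1/n$ against $\norm{T^n-T^{n-1}}_{1\to 1}\leq 2$ only yields a decay $n^{-\theta}$ with $\theta<1$ on $L^q(M)$, and the analogous naive interpolation of resolvent bounds also falls short near $\Tdb\setminus\{1\}$. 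Your closing remark that only the ``bookkeeping of interpolation angles'' requires care therefore understates the difficulty: the missing idea is how to recover the discrete Ritt condition at all, not how to tune the angles.
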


\begin{proof} This result was proved by Blunck in the commutative setting
\cite[Thm. 1.1]{Bl2}, using interpolation. 
His arguments apply as well to the noncommutative setting, using (\ref{5Inter}).
\end{proof}

In accordance with this lemma we will say that an absolute
contraction $T\colon M\to M$ is analytic if
$T_p$ is analytic for one (equivalently for all) $1<p<\infty$.

We say that $T\colon M\to M$ is positive if $T(x)\geq 0$ for any $x\in M_+$. 
If $T$ is an absolute contraction, then $T_p(x)\geq 0$ for any 
$x\in L^p(M)_+$ and any $p$. 

\begin{theorem}\label{5Main}
Let $T$ be a positive analytic absolute contraction. Then for any
$1<p<\infty$ and any integer $m\geq 0$, we have an estimate
\begin{equation}\label{5IMm}
\Bignorm{{\sup_{n\geq 0}}^{+} (n+1)^m T^n(T-I)^m (x)}_p\,\lesssim\, \norm{x}_p,\qquad
x\in L^p(M).
\end{equation}
\end{theorem}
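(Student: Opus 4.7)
The plan is to mirror the proof of Theorem \ref{4Main} in the noncommutative setting, with three classical ingredients transferred and one genuinely new technical tool. The identity-level steps (Abel summation, induction on $m$) are purely algebraic and transfer verbatim; the substantive work is to replace the commutative sup-norm by the $L^p(M;\ell^\infty)$ norm in each estimate, and to replace the pointwise Cauchy--Schwarz inequality by its noncommutative analogue.

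First I would establish the noncommutative analogue of (\ref{3DiscreteSF2}). Since $T$ is a positive absolute contraction, each $T_p$ is a positive contraction on $L^p(M)$, and the semigroup $(e^{-t}e^{tT_p})_{t\geq 0}$ is a semigroup of positive contractions on every $L^q(M)$. Hence $A=I-T_p$ is sectorial of type $<\tfrac{\pi}{2}$ and, by the noncommutative counterparts of \cite{Du,W2,KaW1} used in Proposition \ref{2Weis} (whose noncommutative versions are available through \cite{JX} and \cite{JLX}), it admits a bounded $H^\infty(\Sigma_{\theta_0})$ calculus for some $\theta_0<\tfrac{\pi}{2}$. Applying the noncommutative square function theory from \cite{JLX} (both the column version with $|y|^2=y^*y$ and the row version with $|y^*|^2=yy^*$), Proposition \ref{3Matrix} and Theorem \ref{3DiscreteSF} go through to yield, for every $m\geq 1$,
\begin{equation*}
\Bignorm{\Bigl(\sum_{n\geq 0}(n+1)^{2m-1}\bigl\vert T^n(T-I)^m(x)\bigr\vert^2\Bigr)^{\frac{1}{2}}}_{L^p(M;\ell^2_c)}\,\lesssim\,\norm{x}_p,
\end{equation*}
together with the corresponding row estimate.

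Next, for the base case $m=0$, I would invoke the noncommutative Akcoglu/Dunford--Schwartz maximal ergodic theorem of \cite{JX}, which gives
$\bignorm{{\sup_{n\geq 0}}^{+}\tfrac{1}{n+1}\sum_{k=0}^{n}T^k(x)}_p\lesssim \norm{x}_p$.
Combined with the Abel identity (\ref{4IPP1}), this reduces (\ref{5IMm}) for $m=0$ to controlling $\bignorm{{\sup_{n\geq 1}}^{+}\tfrac{1}{n}\sum_{k=1}^{n}k(T^k-T^{k-1})(x)}_p$ by the $L^p(M;\ell^2_c\cap\ell^2_r)$ square function norm of $(\sqrt{k}(T^k-T^{k-1})(x))_{k\geq 1}$, which is exactly the step already handled by the square function estimate above combined with the noncommutative Cauchy--Schwarz inequality for $L^p(M;\ell^\infty)$ from \cite{JX}.

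For the inductive step, set $B_n^m=(n+1)^m T^n(T-I)^m$ as in (\ref{3Delta}). The algebraic identities (\ref{4IPP2}) and (\ref{4IPP3}) remain valid in $M$ and show that (i) the ``weak'' maximal inequality (\ref{4WeakMax})$_m$ combined with the noncommutative Cauchy--Schwarz and the square function estimate implies (\ref{4Max})$_m$, and (ii) (\ref{4Max})$_m$ together with the square function estimate at level $m+1$ implies (\ref{4WeakMax})$_{m+1}$. The induction then proceeds exactly as in Theorem \ref{4Main}.

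The main obstacle is the second step: the noncommutative replacement for the pointwise inequality $\bigl\vert\tfrac{1}{n}\sum_{k=1}^{n}k(B_k^m(x)-B_{k-1}^m(x))\bigr\vert\leq\bigl(\sum_k k\vert B_k^m(x)-B_{k-1}^m(x)\vert^2\bigr)^{\frac{1}{2}}$, which in the commutative case is trivial Cauchy--Schwarz. In the noncommutative setting one must control the $L^p(M;\ell^\infty)$ norm of Cesàro-type averages of an arbitrary sequence $(y_k)$ by the column and row square function norms of $(\sqrt{k}\,y_k)$; this noncommutative Cauchy--Schwarz lemma, together with the factorization structure of $L^p(M;\ell^\infty)$ from \cite{J,JX}, is the real technical core of the argument beyond the commutative Theorem \ref{4Main}.
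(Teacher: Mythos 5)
Your overall architecture (square functions $\Rightarrow$ maximal inequalities, via the Abel identities and induction on $m$) is the right one, but it diverges from the paper at the decisive point, and the divergence opens a genuine gap. The paper does \emph{not} prove noncommutative $L^p(M;\ell^2_c)$/$\ell^2_r$ square function estimates for $p\neq 2$. Proposition \ref{5SFE} only establishes the scalar Hilbert-space estimate $\bigl(\sum_n (n+1)^{2m-1}\norm{T^n(T-I)^m x}_2^2\bigr)^{1/2}\lesssim\norm{x}_2$, because on $L^2(M)$ McIntosh's theorem gives the bounded $\HI(\Sigma_{\theta_0})$ calculus for $I-T_2$ unconditionally; the passage to the $L^p$ maximal inequality is then done by repeating the arguments of \cite[Section 5]{JX} (fractional Ces\`aro means $M_n^\alpha$, the noncommutative maximal ergodic theorem, and Stein's complex interpolation). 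Your route instead requires a bounded $\HI(\Sigma_{\theta_0})$ calculus, $\theta_0<\frac{\pi}{2}$, for $I-T_p$ on $L^p(M)$ with $p\neq 2$. That is not ``available through \cite{JX} and \cite{JLX}'': the commutative Proposition \ref{2Weis} rests on Akcoglu-type dilation/transference (\cite{Du}) and Weis's $R$-analyticity theorem (\cite{W2}), neither of which is known for general positive absolute contractions on noncommutative $L^p$ (the results of \cite{JLX} assume a selfadjoint $L^2$-generator, which is exactly the hypothesis this theorem is designed to remove). Worse, the noncommutative $R$-analyticity statement you would want to quote is Corollary \ref{6R1} of this paper, which is itself deduced from Theorem \ref{5Main}; invoking it here is circular.

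The second problem is the step you yourself flag as the ``technical core'' but leave unproved. The operator Cauchy--Schwarz inequality does give $\bigl\vert\frac{1}{n}\sum_{k=1}^n k\,y_k\bigr\vert^2\leq \sum_k k\,y_k^*y_k$, hence a single positive majorant of the \emph{moduli}; but the sequences in question (e.g.\ $B_k^m(x)-B_{k-1}^m(x)$) are not positive, and as the paper warns, $\bignorm{{\sup_n}^{+}x_n}_p\neq\bignorm{{\sup_n}^{+}\vert x_n\vert}_p$ in general, so the characterization (\ref{5Domin}) does not apply and a majorant of the moduli does not produce the required factorization $x_n=az_nb$. Controlling $L^p(M;\ell^\infty)$-norms of such averages is precisely the difficulty that forces \cite{JX} into the $M_n^\alpha$/complex-interpolation machinery rather than a verbatim transfer of the commutative Cauchy--Schwarz argument. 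So as written the proposal is not a proof: either you must supply the missing $\HI$-calculus input on $L^p(M)$ together with a correct noncommutative Cauchy--Schwarz lemma for $L^p(M;\ell^\infty)$, or you should follow the paper and work only at $p=2$, then interpolate \`a la \cite[Section 5]{JX}.
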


In particular we obtain a maximal inequality
$$
\Bignorm{{\sup_{n\geq 0}}^{+} T^n(x)}_p\,\lesssim\, \norm{x}_p
$$
for any $T$  as above.

These maximal theorems were proved in \cite{JX} under the assumption that the Hilbertian operator
$T_2\colon L^2(M)\to L^2(M)$ is selfadoint and positive in the sense that 
$\sigma(T_2)\subset [0,1]$. This was recently extended by Bekjan \cite{Bek} to the case when 
the numerical range of $T_2$ is included in a Stolz domain $B_\gamma$ 
for some $\gamma\in\bigl(0,\frac{\pi}{2}\bigr)$. These results are covered 
by Theorem \ref{5Main}. Indeed it is easy to see that the latter numerical range
condition implies that 
$T_2$ is analytic.

A key step in proving Theorem \ref{5Main} is the following series of square function estimates.

\begin{proposition}\label{5SFE}
Let $T\colon M\to M$ be an analytic absolute contraction. Then for any integer 
$m\geq 1$, we have an estimate
\begin{equation}\label{5SFE1}
\Bigl(\sum_{n=0}^{\infty} (n+1)^{2m-1}\bignorm{T^n(T-I)^m(x)}^2_2
\Bigr)^{\frac{1}{2}}\,\lesssim\,\norm{x}_2,\qquad x\in L^2(M).
\end{equation}
\end{proposition}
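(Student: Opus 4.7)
The plan is to work entirely on the Hilbert space $L^2(M)$ and reduce \eqref{5SFE1} to a scalar specialization of Proposition \ref{3Matrix} and Theorem \ref{3DiscreteSF}(1). The simplification over the commutative case is that on a Hilbert space $\bignorm{(\sum_n\vert x_n\vert^2)^{\frac{1}{2}}}_2$ collapses to $(\sum_n\norm{x_n}_2^2)^{\frac{1}{2}}$, so no Khintchine/Fubini step is needed and the matrix-valued functional calculus of Proposition \ref{3Matrix} reduces to a scalar one.

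First I would establish that $A=I-T_2$ admits a bounded $H^\infty(\Sigma_{\theta_0})$ functional calculus for some $\theta_0<\frac{\pi}{2}$, following the blueprint of Proposition \ref{2Weis}. Since $T$ is an absolute contraction, interpolation makes $T_2$ a contraction on the Hilbert space $L^2(M)$; hence $A$ is m-accretive and McIntosh's theorem yields a bounded $H^\infty(\Sigma_\theta)$ calculus for every $\theta>\frac{\pi}{2}$. By Lemma \ref{5Analytic}, $T_2$ is analytic, so by the analysis around \eqref{2Sector2} the operator $A$ is sectorial of type $<\frac{\pi}{2}$ and $(e^{-tA})_{t\geq 0}$ is a bounded analytic semigroup. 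Because $L^2(M)$ is a Hilbert space, boundedness and $R$-boundedness coincide, so the semigroup is $R$-bounded analytic, and \cite[Prop.~5.1]{KaW1} lowers the $H^\infty$ angle below $\frac{\pi}{2}$.

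Next I would fix $\gamma\in(\theta_0,\frac{\pi}{2})$ and reproduce the contour computation of Proposition \ref{3Matrix}. Proposition \ref{2SFE} applied to $z\mapsto z^{\frac{1}{2}}/(e^{\pm i\gamma}-z)$, together with Lemma \ref{3EasySFE} for the bounded arc of $L_\gamma$, yields
$$
\Bigl(\int_{L_\gamma}\norm{A^{\frac{1}{2}}(\lambda-A)^{-1}x}_2^2\,\vert d\lambda\vert\Bigr)^{\frac{1}{2}}\,\lesssim\,\norm{x}_2,
$$
as well as the symmetric estimate for $A^*$ (which is also m-accretive on $L^2(M)$). For a sequence of polynomials $(F_n)_{n\geq 1}$ with $F_n(1)=0$, setting $f_n(\lambda)=F_n(1-\lambda)$, pairing with an arbitrary $(y_n)_{n\geq 1}\subset L^2(M)$ via the Dunford representation \eqref{3Int}, and applying Cauchy-Schwarz---first inside the inner product, then over $\lambda\in L_\gamma$, then over $n$---produces
$$
\Bigl(\sum_{n\geq 1}\norm{F_n(T_2)x}_2^2\Bigr)^{\frac{1}{2}}\,\lesssim\,\norm{x}_2\,\sup_{z\in B_\gamma}\Bigl(\sum_{n\geq 1}\vert F_n(z)\vert^2\Bigr)^{\frac{1}{2}}.
$$
The general case follows by subtracting $F_n(1)$ as at the end of Proposition \ref{3Matrix}. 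Specializing to $F_n(z)=n^{m-\frac{1}{2}}z^{n-1}(z-1)^m$ and invoking the pointwise bound $\sum_n\vert F_n(z)\vert^2\leq(\vert 1-z\vert/(1-\vert z\vert))^{2m}$ from the proof of Theorem \ref{3DiscreteSF}(2), which is controlled on $B_\gamma$ by \eqref{3Stolz}, then yields \eqref{5SFE1}.

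The main obstacle is the first step: on a general Banach space, analyticity of $T$ does not deliver bounded $H^\infty$ calculus for $A$ with angle below $\frac{\pi}{2}$. The argument crucially exploits the Hilbert space structure of $L^2(M)$ twice, once via m-accretivity of $I-T_2$ (to feed McIntosh) and once via the automatic $R$-boundedness of bounded families (to feed Kalton-Weis). Once that angle reduction is secured, everything downstream is a scalar-valued streamlining of the commutative-$L^p$ argument already developed in Section 3.
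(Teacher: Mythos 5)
Your proof is correct and follows essentially the same route as the paper: contractivity of $T_2$ on the Hilbert space $L^2(M)$ plus analyticity give a bounded $H^\infty(\Sigma_{\theta_0})$ functional calculus for $A=I-T_2$ with $\theta_0<\frac{\pi}{2}$, after which the scalar (Hilbert-space) version of the contour argument of Proposition \ref{3Matrix} and the specialization to $F_n(z)=n^{m-\frac{1}{2}}z^{n-1}(z-1)^m$ as in Theorem \ref{3DiscreteSF}(2) yield the estimate. The only cosmetic difference is that the paper obtains the angle reduction by citing McIntosh \cite{MI} directly, whereas you pass through the automatic $R$-boundedness of bounded families on Hilbert space and \cite[Prop.~5.1]{KaW1}, which amounts to the same thing.
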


\begin{proof}
The argument is entirely similar to the one devised to prove (\ref{3DiscreteSF2}). 
We use the assumption that $T_2$ is analytic. We let
$A=I-T_2$ and we let $(T_t)_{\geq 0}$ be the semigroup generated by $-A$ on $L^2(M)$.
This is a bounded analytic semigroup and since $T_2$ is a contraction, 
we have $\norm{T_t}\leq 1$ for any $t\geq 0$. Hence by \cite{MI} (see also \cite{LM2}),
$A$ admits a bounded $\HI(\Sigma_{\theta_0})$ for some $\theta_0<\frac{\pi}{2}$ and
hence, for every $\theta\in\bigl(\theta_0,\frac{\pi}{2}\bigr)$ and
for any $\varphi\in\HI_0(\Sigma_\theta)$, there exists a constant $C\geq 0$ such that
\begin{equation}\label{5SFEHilbert}
\Bigl(\int_{0}^{\infty}\bignorm{\varphi(tA)x}_2^2\, \frac{dt}{t}\,\Bigr)^{\frac{1}{2}}\,\leq\, C\norm{x}_2,
\qquad x\in L^2(M).
\end{equation}
Arguing as in the proof of Proposition \ref{3Matrix} and using (\ref{5SFEHilbert}) 
in place of Proposition \ref{2SFE}, we obtain that there exists an angle 
$\gamma\in \bigl(0,\frac{\pi}{2}\bigr)$ such that the natutal functional calculus
$$
u_\gamma\colon \P_\gamma\longrightarrow B(L^2(M)),\qquad u_\gamma(F)=F(T_2),
$$
is completely bounded. That is, there exists a 
constant $C\geq 0$ such that for  any $N\geq 1$, for any $N\times N$ matrix $[F_{ij}]$ of polynomials 
and for any
$x_1,\ldots, x_N$ in $L^2(M)$, 
$$
\Bigl(\sum_{i=1}^{N}\Bignorm{\sum_{j=1}^{N} F_{ij}(T)x_j}_2^2\Bigr)^{\frac{1}{2}} 
\,\leq\, C
\bignorm{[F_{ij}]}_\gamma\, \Bigl(\sum_{j=1}^{N} \norm{x_j}_2^2\Bigr)^{\frac{1}{2}}.
$$
Then the argument in the proof of Theorem \ref{3DiscreteSF} yields the result.
\end{proof}

\begin{proof}[Proof of Theorem \ref{5Main}]
Once we have  the estimates (\ref{5SFE1}) in hands, one can 
deduce Theorem \ref{5Main} by repeating the arguments of \cite[Section 5]{JX} 
(see also \cite{Bek}). 
\end{proof}

\begin{remark}
Let $T$ be as in Theorem \ref{5Main} and for any complex number $\alpha$, let
$M_n^{\alpha}(T)$ be defined as in \cite[p. 409]{JX}. ($M_n^1(\cdotp)$ is equal to the average
$M_n(\cdotp)$ given by (\ref{6Stein}).) Then the argument in \cite[Section 5]{JX}
shows that for any $\alpha\in\Cdb$ and any $1<p<\infty$, there is an estimate
$$
\Bignorm{{\sup_{n\geq 0}}^{+} M_n^\alpha(T)x}_p\,\lesssim\, \norm{x}_p,\qquad 
x\in L^p(M).
$$
The estimate (\ref{5IMm}) corresponds to $\alpha=-m$.

A similar comment applies to Theorem \ref{4Main}.
\end{remark}

Following \cite[Rem. 2.4]{JX}, the definition of $L^p(M,\ell^\infty)$ can be extended 
to arbitrary index sets. For any set $I$ and any $1\leq p<\infty$, 
$L^p(M;\ell^\infty_I)$ is defined as the space of all families $(x_i)_{i\in I}$ of 
$L^p(M)$ which can be factorized as $x_i=az_ib$, where $a,b\in L^{2p}(M)$ and
$(z_i)_{i\in I}$ belongs to $\ell^\infty_I(M)$. Moreover the norm of 
$(x_i)_{i\in I}$ in $L^p(M;\ell^\infty_I)$ is defined as the infimum of all
$\norm{a}_{2p}\sup_i\norm{z_i}\norm{b}_{2p}$ running over all such factorizations. 
We let $\bignorm{{\displaystyle{\sup_{i}}^{+} x_i}}_p$ denote the norm of an
element $(x_i)_{i\in I}$ of $L^p(M;\ell^\infty_I)$.
The analog of (\ref{5Domin}) holds in this general case, that is, 
a positive family $(x_i)_{i\in I}$ belongs to 
$L^p(M;\ell^\infty_I)$ if and only if there exists a positive $a\in L^p(M)$ such that 
$x_i\leq a$ for any $i\in I$ and moreover,
\begin{equation}\label{6Domin}
\bignorm{{\sup_{i}}^{+} x_i}_p\,=\,\inf\bigl\{\norm{a}_p\, :\, a\in L^p(M),\ a\geq 0
\quad\hbox{and}\quad x_i\leq a\ \hbox{for any } i\in I\bigr\}.
\end{equation}
In the sequel we will deal with semigroups and apply the above facts with $I=\Rdb_+$.

Let $(T_t)_{t\geq 0}$ be a semigroup of operators on $M$.
Assume that for any $t\geq 0$, $T_t$ is an absolute contraction and that 
for any $1< p<\infty$, $(T_t)_{t\geq 0}$ is strongly continuous
on $L^p(M)$. (By \cite[Prop. 1.23]{D}, this holds true for example if
for any $x\in M$,
$T_t(x)\to x$ in the $w^*$-topology of $M$ when $t\to 0^+$.) 
We let $-A_p$ denote the generator 
of $(T_t)_{t\geq 0}$ acting on $L^p(M)$. 

Given any two indices $1<p,q<\infty$, $A_p$ is sectotial of type $<\frac{\pi}{2}$
if and only if $A_q$ is sectotial of type $<\frac{\pi}{2}$. In other words 
$(T_t)_{t\geq 0}$ being a bounded analytic semigroup on $L^p(M)$ does not depend on $1<p<\infty$. 
This is a continuous analog of Lemma \ref{5Analytic}, whose proof is identical 
to the one of \cite[Prop. 5.4]{JLX}. We skip the details.

\begin{theorem}\label{6IMSG}
Let $(T_t)_{t\geq 1}$ be a semigroup on $M$ as above.
Assume that for any $t\geq 0$,
$T_t$ is positive and that for one $1<p<\infty$ (equivalently, for all
$1<p<\infty$),
$(T_t)_{t\geq 1}$ is analytic on $L^p(M)$. 
Then for any $1<p<\infty$ and any integer $m\geq 0$,
we have an estimate
$$
\biggnorm{{\sup_{t>0}}^{+} t^m  \,
\frac{\partial^m}{\partial t^m}\,\bigl(T_t(x)\bigr)}_p\,\lesssim\,\norm{x}_p,\qquad x\in L^p(M).
$$
\end{theorem}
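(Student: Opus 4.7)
The plan is to transfer Theorem \ref{5Main} to the continuous setting by mimicking the discretization argument used in the proof of Corollary \ref{4Sgp}. First I would check that for each fixed $t>0$, the operator $T_t$ is a positive analytic absolute contraction with analyticity constant uniform in $t$. Positivity and absolute contractivity are contained in the hypotheses, so only uniform discrete analyticity of $T_t$ has to be verified. Since $(T_t)_{t\geq 0}$ is bounded analytic on $L^p(M)$, there exists $K\geq 0$ such that $\norm{sA_pT_s}\leq K$ for every $s>0$, and the identity
$$T_t^n-T_t^{n-1}\,=\,-\int_0^t A_p\,T_{(n-1)t+u}\,du$$
gives $\norm{n(T_t^n-T_t^{n-1})}\leq nK\log(n/(n-1))$, which is bounded uniformly in $n\geq 2$ and $t>0$.

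With this uniform analyticity in hand, I would apply Theorem \ref{5Main} to $T_{1/k}$ for each $k\geq 1$ to obtain
$$\Bignorm{{\sup_{n\geq 0}}^{+}(n+1)^m T_{1/k}^n(T_{1/k}-I)^m x}_p\,\leq\, C\norm{x}_p,$$
with a constant $C$ independent of $k$. For fixed positive numbers $t_1,\ldots,t_N$ and $n_{jk}=\lfloor kt_j\rfloor+1$, we have $n_{jk}/k\to t_j$ and $(n_{jk}+1)/k\to t_j$, and restricting the above sequence to the indices $\{n_{jk}\}_{j=1}^N$ yields
$$\Bignorm{\bigl((n_{jk}+1)^m T_{1/k}^{n_{jk}}(T_{1/k}-I)^m x\bigr)_{j=1}^{N}}_{L^p(M;\ell^\infty_N)}\,\leq\, C\norm{x}_p.$$

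The next step is to let $k\to\infty$. For $x\in D(A_p^m)$, the standard finite-difference calculus gives $k^m(T_{1/k}-I)^m x\to(-A_p)^m x$ strongly in $L^p(M)$, while $T_{n_{jk}/k}\to T_{t_j}$ strongly; hence the $j$th entry converges strongly to $t_j^m\,\tfrac{\partial^m}{\partial t^m}(T_t x)\big|_{t=t_j}$. To take this limit inside the $L^p(M;\ell^\infty_N)$-norm, I would extract a weak-* convergent subnet from almost-optimal factorizations $(n_{jk}+1)^m T_{1/k}^{n_{jk}}(T_{1/k}-I)^m x=a^{(k)}z^{(k)}_j b^{(k)}$, using reflexivity of $L^{2p}(M)$ and weak-* compactness of the unit ball of $M$; combining the strong $L^p$-convergence of the triple products with the weak-* limits of the outer factors would identify the limit factorization and yield the analogous bound for $\bigl(t_j^m\,\tfrac{\partial^m}{\partial t^m}(T_t x)|_{t=t_j}\bigr)_{j=1}^N$. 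Extending to arbitrary $x\in L^p(M)$ by density of $D(A_p^m)$ and uniformity of $C$, and then letting $N$ and $(t_j)$ range, the desired estimate on all of $(0,\infty)$ follows from the characterization of $L^p(M;\ell^\infty_I)$ for an arbitrary index set $I$ in terms of uniform boundedness of its finite restrictions (itself provable by the same weak-* compactness device).

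The principal obstacle is precisely this weak-* limit step in $L^p(M;\ell^\infty_N)$: the multiplication $L^{2p}(M)\times M\times L^{2p}(M)\to L^p(M)$ is only separately weak-* continuous, so one must combine weak-* compactness of the outer factors with strong $L^p$-convergence of the triple products in order to identify the limiting factorization. This subtlety is absent in the commutative proof of Corollary \ref{4Sgp}, where $\ell^\infty$-valued maxima behave well under pointwise convergence; in the noncommutative setting it is the only nontrivial ingredient beyond the discrete result of Theorem \ref{5Main} and the uniform analyticity of $T_t$.
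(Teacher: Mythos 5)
Your overall strategy is the one the paper follows: reduce to finite families $t_1,\ldots,t_N$ via \cite[Prop. 2.1 and Rem. 2.4]{JX}, apply the discrete Theorem \ref{5Main} to $T_{1/k}$ with a constant uniform in $k$, and pass to the limit $k\to\infty$ as in Corollary \ref{4Sgp}. Your explicit verification of uniform discrete analyticity of $T_t$ (via $T_t^n-T_t^{n-1}=-\int_0^t A_pT_{(n-1)t+u}\,du$ and $n\log\frac{n}{n-1}\leq 2$) is correct and is a useful point the paper leaves implicit; one should also note that the constant in Theorem \ref{5Main} depends on $T$ only through its analyticity constant, which is what makes the uniformity in $k$ legitimate.

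However, the step you single out as the crux --- passing to the limit inside the $L^p(M;\ell^\infty_N)$-norm --- is argued incorrectly. Extracting weak (resp.\ weak-*) convergent subnets $a^{(k)}\to a$, $z^{(k)}_j\to z_j$, $b^{(k)}\to b$ from near-optimal factorizations does \emph{not} identify the limit factorization: multiplication is not jointly continuous for these topologies, and the strong convergence of the triple products does not repair this. Already in the commutative case $M=L^\infty(0,1)$, $p=2$, taking $a^{(k)}=b^{(k)}=r_k$ (Rademacher functions) and $z^{(k)}_j=1$ gives $a^{(k)}z^{(k)}_jb^{(k)}=1\to 1$ strongly while $a=b=0$ weakly, so $az_jb=0\neq 1$. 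Thus the weak limits of the factors need not factorize the limit of the products, and your argument stalls exactly at the point you flagged as the only nontrivial one. The statement you need is nevertheless true, but for a different reason: $L^p(M;\ell^\infty_N)$ is isometrically the dual of $L^{p'}(M;\ell^1_N)$ (see \cite{J} and \cite[Section 2]{JX}), so its norm is lower semicontinuous under entrywise weak convergence in $L^p(M)$, giving $\bignorm{(y_j)_j}_{L^p(M;\ell^\infty_N)}\leq\liminf_k\bignorm{(y_j^{(k)})_j}_{L^p(M;\ell^\infty_N)}\leq C\norm{x}_p$ directly. (For $m=0$ one could alternatively use positivity and (\ref{6Domin}), taking a weak limit of the dominating elements $a$, but for $m\geq 1$ the entries are not positive, so the duality argument is the one to use.) With that substitution, and the reduction from arbitrary index sets to finite ones taken from \cite[Prop. 2.1 and Rem. 2.4]{JX} rather than re-proved by the same flawed compactness device, your proof coincides with the paper's.
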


\begin{proof}
Fix $p$ and $m\geq 0$. According to \cite[Prop. 2.1 and Rem. 2.4]{JX}, it suffices to find a constant
$C\geq 0$ such that for any finite family $t_1,\ldots,t_N$ of positive real numbers,
$$
\biggnorm{{\sup_{k}}^{+} t_k^m  \,
\frac{\partial^m}{\partial t^m}\,\bigl(T_t(x)\bigr)_{\big\vert t=t_k}}_p\,\leq\, C\,\norm{x}_p
$$
for any $x\in L^p(M)$. 
This follows from Theorem \ref{5Main}, using the same approximation argument as in 
the proof of Corollary \ref{4Sgp}.
\end{proof}

We end this section with applications to $R$-analyticity (see Section 2 for terminology and background).
We recall Weis's Theorem \cite{W2} that if $(T_t)_{t\geq 0}$ is a bounded analytic 
semigroup on some commutative $L^p$-space (with $1<p<\infty$) 
such that each $T_t$ is contractively regular, then 
$(T_t)_{t\geq 0}$ is actually an $R$-bounded analytic semigroup. 
(This result was used in the proof of
Proposition \ref{2Weis} in the present paper.) 
The next corollary is an analog of that result in our noncommutative setting. 
In the selfadjoint case, it was established in \cite[Thm. 5.6]{JLX}.
The proof in the analytic case follows a similar scheme so we will be brief.

\begin{corollary}\label{6R1} Let $(T_t)_{t\geq 1}$ be as in Theorem \ref{6IMSG}. 
Then for any $1<p<\infty$, the realization of $(T_t)_{t\geq 0}$ on $L^p(M)$ is 
an $R$-bounded analytic semigroup.
\end{corollary}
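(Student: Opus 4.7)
The plan is to verify Weis's characterization of $R$-analyticity recalled in Section~2: it suffices to show that both families
\[
\{T_t\,:\,t>0\}\qquad\text{and}\qquad\{tA_pT_t\,:\,t>0\}
\]
are $R$-bounded on $L^p(M)$, where $-A_p$ denotes the generator of $(T_t)_{t\geq 0}$ on $L^p(M)$. I would carry out these two verifications separately, following the same scheme as \cite[Thm.~5.6]{JLX} but replacing the self-adjointness input there by our analyticity assumption combined with Theorem~\ref{6IMSG}.

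For the first family, each $T_t$ is positive on $L^p(M)$ and Theorem~\ref{6IMSG} with $m=0$ gives $\bignorm{\sup_{t>0}^{+} T_t(x)}_p\lesssim\norm{x}_p$ for every $x\in L^p(M)$. Restricted to positive elements, the equivalent description (\ref{6Domin}) of this norm yields a uniform positive majorization: for every $x\geq 0$ in $L^p(M)$ there exists $a\geq 0$ in $L^p(M)$ with $T_t(x)\leq a$ for all $t>0$ and $\norm{a}_p\lesssim\norm{x}_p$. I would then invoke the noncommutative lemma from \cite[Chap.~5]{JLX} which transforms this positive domination property into $R$-boundedness of $\{T_t\}$ on $L^p(M)$; its proof combines the factorization (\ref{5Factor}) with the noncommutative Khintchine inequalities and the decomposition of a general $x\in L^p(M)$ into four positive parts.

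For the second family, the operators $tA_p T_t$ are no longer positive, so one cannot invoke the above lemma directly. The idea is to exploit the matrix-valued form of the square function estimates of Proposition~\ref{5SFE}: indeed the proof of that proposition actually establishes the complete boundedness of the natural $H^\infty(\Sigma_\theta)$ functional calculus of $A_2$ on $L^2(M)$ for some $\theta<\frac{\pi}{2}$. Applying this matrix estimate to the function $\varphi(z)=ze^{-z}\in H_0^\infty(\Sigma_\theta)$, and then interpolating from $p=2$ to general $1<p<\infty$ via a noncommutative Stein-type argument (which uses that each $T_t$ is an absolute contraction, hence acts contractively on both $M$ and $L^1(M)$), yields the required $R$-boundedness of $\{tA_pT_t\}$ on $L^p(M)$.

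The main obstacle is the noncommutative transference from $p=2$ to general $p$. In the commutative case (compare Remark~\ref{3Remark2}(b)), the passage from (\ref{3Pf1})--(\ref{3Pf2}) to the $\ell^2$-valued estimates (\ref{3Pf5})--(\ref{3Pf6}) was a routine application of Khintchine's inequality and Fubini; in the noncommutative setting the column/row asymmetry of the noncommutative Khintchine inequalities forces the use of the analytic interpolation machinery developed in \cite[Chap.~5]{JLX}. This is what makes the argument technical, but since the $L^2$ input has already been supplied by Proposition~\ref{5SFE} and the maximal half by Theorem~\ref{6IMSG}, the remaining work is exactly that of \cite[Thm.~5.6]{JLX}, which can be cited rather than reproduced.
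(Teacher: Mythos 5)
Your treatment of the first family $\{T_t\,:\,t>0\}$ is essentially the right idea, but two details are off. The domination property that the JLX-type argument actually consumes is the one for the \emph{dual} semigroup: the key step bounds $\tau\bigl(T_t(x)^*T_t(x)\,y\bigr)$ by passing $T_t$ onto $y$, so one needs $T_t^*(y)\leq a$ with $\norm{a}_r\lesssim\norm{y}_r$, which is why the paper first observes that $(T_t^*)_{t\geq 0}$ also satisfies the hypotheses of Theorem \ref{6IMSG} and applies the maximal inequality to it. Moreover this argument only yields $R$-boundedness of $\{T_t\}$ on $L^q(M)$ for $2\leq q<\infty$ (the noncommutative Khintchine/Schwarz inequalities go the right way only there); the range $1<p<2$ is recovered by duality at the very end. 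These are repairable imprecisions, not fatal ones.

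The genuine gap is in your treatment of $\{tA_pT_t\,:\,t>0\}$. Applying the completely bounded functional calculus of $A_2$ to $\varphi(z)=ze^{-z}$ buys you nothing beyond boundedness on $L^2(M)$, where $R$-boundedness is automatic anyway; the whole difficulty is transferring to $L^p$. A ``Stein-type interpolation'' of the family $tA T_t$ between $L^2$ and the endpoints $L^1(M)$, $M$ cannot work: the operators $tAT_t$ are not uniformly bounded there (analyticity is only available for $1<p<\infty$), there is no second endpoint at which $\{tAT_t\}$ is known to be $R$-bounded, and $\Rad(L^1)$, $\Rad(L^\infty)$ are not usable interpolation endpoints. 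The paper's resolution is different and is the one idea you are missing: it never interpolates the derivative family at all. Instead it interpolates the \emph{semigroup over a complex sector}. One has $R$-boundedness of $\{T_t\,:\,t>0\}$ on $L^q(M)$ for large finite $q$ (first half of the proof), and $R$-boundedness of the rotated families $\{T_{te^{\pm i\nu}}\}$ on $L^2(M)$ simply because bounded sets of Hilbert space operators are $R$-bounded. Writing $L^p=[L^q,L^2]_\alpha$, hence $\Rad(L^p)=[\Rad(L^q),\Rad(L^2)]_\alpha$, Stein interpolation applied to the analytic family $z\mapsto T_z$ gives $R$-boundedness of $\{T_z\,:\,z\in\Sigma_{\alpha\nu}\}$ on $L^p(M)$. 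This is precisely the definition of an $R$-bounded analytic semigroup, and the $R$-boundedness of $\{tA_pT_t\}$ then follows for free from Weis's equivalence (e.g.\ by expressing $tAT_t$ as an average of $T_z$ over a circle inside the sector). Without this sector-interpolation step your argument for the second family does not close.
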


\begin{proof} We first observe that 
the dual semigroup $(T_t^*)_{t\geq 0}$ satisfies the assumptions of 
Theorem \ref{6IMSG}. Let $1<r<\infty$. Applying the latter theorem for $m=0$ and (\ref{6Domin}),
we find a constant $C_r>0$ such that for any $y\in L^r(M)_+$, there exists $a\in L^r(M)_+$ such that
$$
\norm{a}_r\leq C_r\norm{y}_r\qquad\hbox{and}\qquad T_t^*(y)\leq a\quad\hbox{for any }\ t\geq 0.
$$
Then the argument in the proof of \cite[Thm. 5.6]{JLX} shows that for any $2\leq q<\infty$, the set
\begin{equation}\label{6Fq}
F_q=\bigl\{ T_t\colon L^q(M)\longrightarrow L^q(M)\, :\, t\geq 0\bigr\}
\end{equation}
is $R$-bounded. 

The analyticity assumption ensures the existence of an angle
$\nu\in\bigl(0,\frac{\pi}{2}\bigr)$ 
such that the realization 
of $(T_t)_{t\geq 0}$ on $L^2(M)$ extends to a 
bounded family $(T_z)_{z\in\overline{\Sigma_\nu}}$ of opertors on $L^2(M)$, 
whose restriction to $\Sigma_\nu$ is analytic.
Since boundedness is equivalent to $R$-boundedness on Hilbert spaces, this immediately implies that the sets
\begin{equation}\label{6G2}
\bigl\{ T_{te^{i\nu}}\colon L^2(M)\to L^2(M)\, :\, t\geq 0\bigr\}\qquad\hbox{and}\qquad
\bigl\{ T_{te^{-i\nu}}\colon L^2(M)\to L^2(M)\, :\, t\geq 0\bigr\}
\end{equation}
are $R$-bounded.

Let $2<p<\infty$, let $q>p$ be a finite number and let 
$\alpha = 2(q-2)^{-1}\bigl(\frac{q}{p} -1\bigr)$. 
In accordance with (\ref{5Inter}), this
number is chosen 
so that $L^p(M)=[L^q(M),L^2(M)]_\alpha$. As is well-known, this implies 
that
$$
{\rm Rad}\bigl(L^p(M)\bigr)\, =\,\bigl[{\rm Rad}\bigl(L^q(M)\bigr),{\rm Rad}\bigl(L^2(M)\bigr)\bigr]_\alpha
$$
isomorphically. Applying Stein's interpolation principle as  
in the proof of \cite[Thm. 5.6]{JLX} and the $R$-boundedness of the
sets in (\ref{6G2}), we deduce that
$$
\bigl\{ T_z\colon L^p(M)\longrightarrow L^p(M)\, :\, z\in \Sigma_{\alpha\nu}\bigr\}
$$
is $R$-bounded. This shows that $(T_t)_{t\geq 0}$ is an
$R$-bounded analytic semigroup on $L^p(M)$.

The case $1<p<2$ easily follows by duality.
\end{proof}

Let us finally come back to the discrete case. Blunck \cite[Thm. 1.1 and Thm. 1.2]{Bl1} showed that
any analytic contractively regular operator on a commutative $L^p$-space (with $1<p<\infty)$ is an 
$R$-analytic power bounded operator (see Remark \ref{3Remark2} (b) in the present paper
for a proof of this result). 
This is a discrete analog of Weis's Theorem. Here is a noncommutative version.

\begin{proposition}\label{6R2} Let $T\colon M\to M$ be an absolute contraction
and assume that $T$ is positive. Let $1<p<\infty$. If $T$ is analytic, then 
$T_p\colon L^p(M)\to L^p(M)$ is an $R$-analytic power bounded operator for any $1<p<\infty$.
\end{proposition}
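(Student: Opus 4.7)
My plan is to reduce the discrete $R$-analyticity to its continuous counterpart, Corollary \ref{6R1}, and then translate back via a resolvent characterisation. Starting from $T$, I set $A=I-T$ and introduce the continuous semigroup $T_t=e^{-t}e^{tT}$ from formula (\ref{2Tt}), whose generator is $-A$. Expanding the exponentials as convergent power series shows that each $T_t$ is a positive absolute contraction on $M$: positivity is inherited from $T$, while $\norm{T_t}_{M\to M}\leq e^{-t}e^{t\norm{T}}\leq 1$, and the same estimate applies to $T_t^{*}$. Moreover, $T_p$ being analytic makes $A$ sectorial of type less than $\frac{\pi}{2}$ on $L^p(M)$, so $(T_t)_{t\geq 0}$ is a bounded analytic semigroup on $L^p(M)$; strong continuity is automatic since $A$ is bounded.

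Corollary \ref{6R1} now applies and yields an angle $\nu\in(0,\frac{\pi}{2})$ for which the holomorphic extension $\{T_z:z\in\Sigma_\nu\}$ is $R$-bounded on $L^p(M)$. To pass from continuous to discrete $R$-analyticity, I would invoke Blunck's resolvent characterisation \cite{Bl1}: $T_p$ is $R$-analytic and power bounded on $L^p(M)$ if and only if $\{(\lambda-1)R(\lambda,T_p):|\lambda|>1\}$ is $R$-bounded. Since $R(\lambda,T_p)=((\lambda-1)+A)^{-1}$, the Laplace representation
\[
(\lambda-1)R(\lambda,T_p)\,=\,(\lambda-1)\int_{0}^{\infty} e^{-(\lambda-1)s}\,T_s\,ds
\]
is valid whenever $\mathrm{Re}(\lambda)>1$ and exhibits $(\lambda-1)R(\lambda,T_p)$ as a Bochner average of members of the $R$-bounded family $\{T_s:s>0\}$ against the probability-type measure $(\lambda-1)e^{-(\lambda-1)s}\,ds$; the standard permanence of $R$-boundedness under such averages produces a uniform $R$-bound for $\lambda$ in this half-plane. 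For the remaining $\lambda$ in $\{|\lambda|>1\}\setminus(1+\Sigma_{\pi-\gamma})$, I would deform the contour of integration into a ray $\{se^{i\vartheta}:s>0\}\subset\Sigma_\nu$ chosen as a function of $\arg(\lambda-1)$, use the holomorphic extension $T_z$ there, and repeat the argument.

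The only delicate point, and hence the main obstacle, is this contour-deformation step: the rotated ray must be adjusted as a function of $\arg(\lambda-1)$ in such a way that the integrand is absolutely integrable and the normalising mass of the rescaled measure stays bounded uniformly for $\lambda$ in the full exterior of the Stolz domain. Once this is organised, the permanence principle for $R$-bounded sets under averages with bounded total variation closes the argument and gives $R$-analyticity of $T_p$ at once, i.e.\ both $\P_{T_p}$ and $\A_{T_p}$ are $R$-bounded.
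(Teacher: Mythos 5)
Your reduction to the continuous case is exactly the paper's: form the semigroup $T_t=e^{-t}e^{tT}$ from (\ref{2Tt}), check that each $T_t$ is a positive absolute contraction and that the semigroup is bounded analytic (and strongly continuous, $A=I-T$ being bounded) on $L^p(M)$, and apply Corollary \ref{6R1} to obtain an $R$-bounded analytic semigroup. The divergence is in the return trip to the discrete setting. The paper simply cites Blunck's Theorem 1.1 of \cite{Bl1}, which contains precisely the implication you are trying to manufacture by hand: if $(e^{t(T-I)})_{t\geq 0}$ is an $R$-bounded analytic semigroup, then $T$ is an $R$-analytic power bounded operator. No Laplace transform or contour deformation is needed at that point.

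The step you yourself flag as ``the main obstacle'' is a genuine gap, and the plan you sketch cannot be carried out in the stated generality. The rotated-ray representation of $R(\lambda,T_p)=((\lambda-1)+A)^{-1}$ as an integral of $T_z$ over the ray $e^{i\vartheta}\Rdb_+$ converges only when $\vert\arg(\lambda-1)+\vartheta\vert<\frac{\pi}{2}$, and since $\vert\vartheta\vert$ is capped by the aperture $\nu<\frac{\pi}{2}$ of the $R$-bounded holomorphic extension, you can only reach $\vert\arg(\lambda-1)\vert<\frac{\pi}{2}+\nu<\pi$. But $\{\vert\lambda\vert>1\}$ contains points with $\arg(\lambda-1)$ arbitrarily close to $\pi$ (take $\lambda$ near $-1$), which no admissible rotation reaches; there the resolvent must instead be controlled via the spectral localization of $\sigma(T_p)$ in a Stolz domain coming from the Ritt condition, and the passage from an $R$-bounded resolvent family to the $R$-boundedness of $\P_{T_p}$ and $\A_{T_p}$ then goes through Cauchy-integral representations of $T^n$ and $n(T^n-T^{n-1})$ over a contour surrounding that domain. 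All of this is exactly the content of the cited theorem of Blunck, which is why the paper invokes it as a black box rather than reproving it; replacing your last two paragraphs by that citation closes your argument.
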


\begin{proof} Let $(T_t)_{t\geq 0}$ be defined by (\ref{2Tt}). 
Then for any $t\geq 0$, $T_t$ is a positive absolute contraction. Moreover for 
any $1<p<\infty$,  $(T_t)_{t\geq 0}$ is analytic on $L^p(M)$.
Hence by Proposition \ref{6R1}, $(T_t)_{t\geq 0}$ is actually an $R$-bounded
analytic  semigoup on $L^p(M)$. By \cite[Thm. 1.1]{Bl1}, this implies  that 
$T_p$  is an $R$-analytic power bounded operator.
\end{proof}

\begin{remark} 
Consider the notions of column boundedness and row boundedness as defined in \cite[Section 4.A]{JLX}
and let us state Col-bounded and Row-bounded versions of Theorem \ref{6IMSG} and 
Proposition \ref{6R2}.
Let $T\colon L^p(M)\to L^p(M)$ and let us say that $T$ is  Col-analytic 
(resp. Row-analytic) power bounded it the two sets $\P_T$ and $\A_T$ from (\ref{2Power}) and (\ref{2Analytic}) are both Col-bounded (resp. Row-bounded). Likewise, let us say that a semigroup 
$(T_t)_{t\geq 0}$ on $L^p(M)$ is a Col-bounded (resp. Row-bounded) analytic semigroup if
the two sets $\{T_t\, :\, t> 0\}$ and $\{tAT_t\, :\, t>0\}$ 
are both Col-bounded (resp. Row-bounded).

Let $(T_t)_{t\geq 0}$  be a semigroup
on $M$ as in Theorem \ref{6IMSG} and assume that $T_t\colon M\to M$
is 2-positive for any $t\geq 0$. Then as in \cite[Thm. 5.6]{JLX}, one can show 
that for any $1<p<\infty$,
the realization of $(T_t)_{t\geq 0}$ on $L^p(M)$ is 
both a Col-bounded and a Row-bounded analytic semigroup.

Likewise, if $T\colon M\to M$ is a 2-positive and analytic absolute contraction,
then for any $1<p<\infty$,
$T_p\colon L^p(M)\to L^p(M)$ is both a Col-analytic and a Row-analytic
power bounded operator. More concretely, this implies in particular that we have estimates
$$
\Bignorm{\Bigl(\sum_n  T^n(x_n)^* T^n(x_n)\Bigr)^{\frac{1}{2}}}_p\,\lesssim\,
\Bignorm{\Bigl(\sum_n  x_n^* x_n\Bigr)^{\frac{1}{2}}}_p
$$
and
$$
\Bignorm{\Bigl(\sum_n  T^n(x_n) T^n(x_n)^*\Bigr)^{\frac{1}{2}}}_p\,\lesssim\,
\Bignorm{\Bigl(\sum_n  x_n x_n^*\Bigr)^{\frac{1}{2}}}_p
$$
for any $1<p<\infty$.
\end{remark}

\end{document}